\title{Functional Stochastic Differential Equations with Positivity Constraints Driven by Fractional Brownian Motion}
\author{ Chadad Monir\thanks{Mathematics Department, Faculty of Sciences Semalalia, Cadi Ayyad University, Boulevard Prince Moulay Abdellah, P. O. Box 2390, Marrakesh 40000, Morocco.
		E-mail: m.chddad@gmail.com}
}
\newcommand{\dint}{\displaystyle\int}
\newtheorem{theorem}{Theorem}[section]
\newtheorem{lemma}{Lemma}[section]
\newtheorem{examples}{Examples}[section]
\newtheorem{proposition}{Proposition}[section]
\newenvironment{proof}{{\sc Proof.}}{}
\newenvironment{keywords}{}{}
\begin{document}
	\date{}
	\newpage
	\maketitle
	\begin{abstract}
		This paper studies a stochastic functional differential equation driven by a fractional Brownian motion with Hurst parameter H>1/2, constrained to be reflected at 0. We prove the existence of solutions using the Euler method. However, uniqueness is demonstrated only under the condition that the fractional term exhibits constant argument deviation. Additionally, we establish the convergence of the method.
	\end{abstract}

	\begin{keywords}
		\begin{flushleft}
			\small \textbf{Keywords : Fractional Brownian motion, Stochastic delay differential equation, Pathwise integral, Euler approximation, Skorokhod reflection problem} 
		\end{flushleft}
	\end{keywords}
	
	\section{Introduction}
				\hspace{0.5cm} In recent years, the field of functional differential equations (FDEs) has gained significant attention from mathematicians and researchers as a powerful mathematical tool for modeling dynamic systems that depend not only on the current state but also on the past history of the system. Unlike ordinary differential equations (ODEs), which involve only the current state and its derivatives, FDEs capture the memory or delay effects in the system's behavior. The concept of functional differential equations can be traced back to the early 20th century when mathematicians started exploring mathematical models that incorporate delay or history-dependent effects. The FDEs bridge the gap between ordinary differential equations and dynamic systems with historical dependencies, offering a versatile tool for understanding and predicting the behavior of complex systems in various scientific and engineering domains.\\
				
				Functional differential equations (FDEs) appear in many real-world scenarios and have numerous applications. For instance, in biology, they model population dynamics, where the growth rate depends not only on the current population size but also on its historical values. In economics, FDEs can be used to study economic systems with time lags in decision-making processes.  Functional differential equations can have various types of delays, such as constant delays, distributed delays, or variable delays depending on the function itself.\\
				
				In some applications, the quantities of interest are inherently positive. For instance, in finance, stock prices and other financial time series represent naturally positive values. Similarly, in ecology and epidemiology, population dynamics and the spread of infectious diseases are expressed as positive quantities.\\
				
				When constructing deterministic differential equation models to describe the delayed dynamics of such quantities, the dynamics may inherently maintain the positivity of these quantities or guide the trajectories back into the feasible domain with minimal effort upon reaching the boundary. This natural behavior motivates the consideration of Functional Differential Equations with positivity constraints. Equations with positivity constraints often lead us to contemplate the Skorokhod problem. Skorokhod, recognized as the pioneer of reflected stochastic differential equations, was the first to publish two seminal papers \cite{Sko, Sko1} titled 'Stochastic Equations for Diffusion Processes in a Bounded Region 1,2,' in which he demonstrated the existence of solutions for stochastic differential equations subject to a reflecting boundary condition. Specifically, Skorokhod proposed the identification of a pair of continuous, non-anticipating processes $(X(t),l(t)), t \geq 0$, such that
				\begin{equation}\label{First-SP}
					X(t)=x_{0}+\int_{0}^{t} a(s, X(s)) \,ds+\int_{0}^{t} b(s, X(s)) \,dW(s)+l(t), \quad \text { a.s.,}
				\end{equation}
				where $W$ is a standard Brownian motion, $X(t) \geq 0,\, t \geq 0$, $l$ is non-decreasing, $l(0)=0$ and
				$$
				l(t)=\int_{0}^{t} \mathbb{I}_{X(s)=0} \, dl(s), \quad t \geq 0 .
				$$
				The last requirement means also that $l$ may increase only when $X$ visits $0$ that is the corresponding measure $dl(s)$ is carried by the set $\{s: X(s)=0\}$. For this purpose, Skorokhod considered the first version of his problem (SP) formulated as follows : Let $f \in C([0,+\infty),\mathbb{R}), f(0) \geq 0$. A pair of continuous functions $(g,l)$ are called a solution of the one-dimensional Skorokhod Problem for $f$ if $g(0)=f(0) $, and if for all $t\geq 0$
				\begin{description}
					\item[1.] $g(t)=f(t)+l(t);$
					\item[2.] $g(t)\geq 0;$
					\item[3.] $l$ is non-decreasing on $[0, \infty)$, $l(0)=0$, and
					\item[4.] $l(t)=\dint_{0}^{t} \mathbb{I}_{g(s)=0} \,dl(s)$.	 
				\end{description}
				This pair is uniquely defined by
				\begin{equation}\label{SPsol}
					l(t)=\underset{0 \leq s \leq t}{\sup}\left(f(s)\right)^{-} \quad  \text{
						and} \quad g(t)=f(t)+\underset{0 \leq s \leq t}{\sup}\left(f(s)\right)^{-},\quad t \geq 0.\end{equation}
				The map from unconstrained to constrained function $f \rightarrow g$ is called the Skorokhod Map (SM). The SP has since proved to be convenient in analyzing a variety of processes and provided a very useful tool for the construction of many constrained processes. \\
				
				For stochastic differential equations driven by fractional Brownian motion (fBm for short) with a Hurst parameter $H>1/2$, and constrained by positivity, few results are available. Interested readers can refer to the works by Falkowski and Słomiński \cite{FS} and  Ferrante and Rovira \cite{FR}. Such equations are valuable tools for modeling systems that exhibit long memory and self-similarity. The inclusion of fractional Brownian motion endows the equations with long-range dependence and self-similarity properties, which are commonly observed in various real-world phenomena. The Hurst parameter, a critical factor with a value greater than $1/2$, determines the strength and persistence of memory within the process, significantly influencing the system's dynamics and behavior. In \cite{FS}, the authors prove an existence and uniqueness result for stochastic differential equations driven by fractional Brownian motion fBm with positivity constraints, using a pathwise approach based on the p-variation norm. Falkowski and Słomiński define the stochastic integral with respect to fBm as an extended Riemann-Stieltjes pathwise integral and use p-variation estimates. Unfortunately, in \cite{FR}, the approach used to prove the existence of a solution differs completely from that in \cite{FS}, as the integral with respect to fBm is defined as an extended Riemann-Stieltjes pathwise integral in the sense of Zähle \cite{16}, and the proofs and techniques rely on $\lambda$-Hölder norms. However, this approach only allows them to demonstrate the uniqueness of solutions over a small time interval. This limitation arises because, in general, the $\lambda$-Hölder norms of the difference between two regulator terms $\underset{0 \leq s \leq .}{\sup}\left(f^1(s)\right)^{-}$ and $\underset{0 \leq s \leq .}{\sup}\left(f^2(s)\right)^{-}$, cannot be controlled by the $\lambda$-Hölder norms of the difference of $f^1$ and $f^2$. \\
				
				Concerning equations driven by both fractional Brownian motion fBm and standard Brownian motion (known as mixed Stochastic differential equations) with reflecting boundary conditions, to the best of our knowledge, the first attempts to address such equations were made by Chadad and Erraoui in \cite{Cha-Err1, Cha-Err2}. The authors demonstrated the existence of a weak solution using a tightness argument. However, in \cite{Cha-Err1}, pathwise uniqueness was proved in the case of additive noise, while it remains an open problem in the multiplicative case. The difficulty arises from the simultaneous presence of two terms in the difference between two solutions, namely the integration with respect to fBm and the difference in the regulator terms, which cannot be controlled together by the same norm. This renders inapplicable the usual inequalities used for pathwise uniqueness, such as Gronwall, Bihari, and others. In \cite{Cha-Err2}, the authors succeeded in solving the problem of pathwise uniqueness, primarily because the fractional term acts with a delay relative to the other components of the equation. \\
				
				Similarly, for stochastic delay differential equations driven by fractional Brownian motion with a Hurst parameter $H > 1/2$ and constrained by positivity were first considered by Besalu and Rovira \cite{BR}. However, delay stochastic equations with positivity constraints driven by Brownian motion have been addressed in the work of Kinnally and Williams \cite{KW}. For equations without reflection driven by fractional Brownian motion, interested readers may refer to previous works (see e.g., \cite{BH, BO, Kub, NNT, NR}). \\
				
				As described earlier, incorporating delays and positivity constraints into equations driven by fractional Brownian motion offers a valuable framework for effectively modeling and comprehending complex systems characterized by long memory and self-similarity properties. This framework has served as a strong motivation for us to delve into the realm of functional stochastic differential equations (FDEs) while imposing positivity constraints, taking the form of	
				\begin{equation}
						\left\{ \begin{array}{lll}
							X(t) & = &\eta(0)+\dint_{0}^{t}b(s,X_{s})\,\mathrm{d}s+\dint_{0}^{t}\sigma(s,X_{s})\,\mathrm{d}B_{s}^{H}+Y(t) ,\,\, t\in[0,T], \\
							X(t) & = & \eta(t),\,\,t\in[-r,0].
						\end{array}\right. \label{equ}
				\end{equation}
				Where $B^{H}$ is a fractional Brownian motion with Hurst parameter $H \in (\frac{1}{2},1)$, defined in a complete probability space $\left(\Omega,\mathcal{F},\mathbb{P}\right)$ and $r>0$ the time delay. The initial condition $\eta$ is a $\theta$-H\"older function from $[-r,0]$ to $\mathbb{R_{+}}$ with $\theta > 1-H$. Let $\mathcal{C}_{r}:=C\left([-r,0],\mathbb{R}\right)$ be the space of $\mathbb{R}$-values continuous functions defined on the interval $[-r,0]$ endowed by the uniform norm $\left\Vert . \right\Vert_{\mathcal{C}_{r}}$. $X_{t}  \in \mathcal{C}_{r}$ denote the function defined by $X_{t}(s)=X(t+s), s\in [-r,0]$. While $Y$ is the regulator term which ensures the non-negativity of $X$. The integral with respect to $B^H$ is understood in the pathwise sense  of Zähle \cite{16} (A precise definition is given in Section 2).\\
				 
				 A particular case of the aforementioned equations was considered by Besalu and Rovira in \cite{BR}, where the coefficients take the form $\sigma(t,X_t)=\sigma(t,X(t-r))$ and $H>1/2$. The authors proved the existence and uniqueness of solutions using an induction argument. However, in our case, we face a challenge where the same approach cannot be applied. The usual proof relying on the Picard method of successive approximations is not applicable, mainly because the Skorokhod map lacks Lipschitz continuity in the $\lambda$-Hölder norm. Consequently, establishing a uniqueness result becomes unfeasible.\\

				To overcome this obstacle, we explore numerical methods for solving the equation. Among various approximation schemes, Euler's scheme stands out as the most commonly used. Several studies have been dedicated to analyzing and understanding Euler's scheme (see e.g., \cite{LL, LYZ}). Its widespread use can be attributed to its simplicity and the fact that the approximations generated by Euler's scheme themselves satisfy  stochastic differential equations. This is particularly significant, as it implies that any limit of a subsequence constitutes a solution to the given equation. Therefore, establishing equicontinuity for the Euler's approximations is sufficient to ensure the existence of a solution.\\
				
				The challenge of establishing the existence of a solution is tackled in two distinct steps. In the first step, we necessitate an estimate ~\eqref{est-frac} of the integral involving the fBm in \eqref{app sch}. To achieve this, we establish a practical estimate denoted as \eqref{est incre}. Subsequently, we demonstrate that the sequence of functions in \eqref{app sch} is both equicontinuous and bounded in $C([-r,T],\mathbb{R})$. In the second step, following the construction of a subsequence converging uniformly to a continuous function, we validate the convergence of all terms within the Euler scheme. In the second part of this paper we establish that \eqref{app sch} forms a Cauchy sequence in a particular case, thus confirming its convergence.\\
				
				The article is structured as follows. In Section 2, we provide fundamental information regarding integration with respect to fractional Brownian motion. We introduce appropriate normed spaces and state our main hypotheses. In Section 3, we define the Euler approximation for deterministic equations and establish that it forms a bounded and equicontinuous sequence. Finally, in Section 4, we demonstrate the existence of a solution.
				
			\section{Preliminaries}
				Now we present the functional framework required for solving Equation \eqref{equ}. We fix $\alpha \in (0,\frac{1}{2})$, $\mu \in (0,1]$ and we consider the following normed spaces. 
				
				\noindent 1. $C^{\mu} \left( [a,b]\right)  $ is the space of $\mu$-H\"older functions $f : [a,b] \longrightarrow \mathbb{R} $ equipped with the norm 
				\begin{equation}\label{mu-norm}
					\parallel f \parallel_{\mu,[a,b]}=\parallel f \parallel_{\infty}+\underset{=\mathfrak{H}_{\mu,[a,b]}(f)}{\underbrace{\underset{a\leq s<t\leq b}{\sup}\dfrac{\vert f(t)-f(s)\vert}{(t-s)^{\mu}}}}
					<+\infty,
				\end{equation}
				where	$$\parallel f \parallel_{\infty}=\underset{a \leq t \leq b}{\max}\vert f(t) \vert.$$
				2. 
				We introduce the space $W_{0}^{1,\alpha,\infty}$ of measurable functions $f : [-r,T] \longrightarrow \mathbb{R}$ such that 
				\begin{equation}
					\parallel f \parallel_{\infty,\alpha,t}\, =  \left\Vert f \right\Vert_{\infty,t}+\int_{0}^{t}\, \left\Vert f(t-s+\cdot )-f(\cdot) \right\Vert_{\infty,s}(t-s)^{-\alpha-1}\, \mathrm{d}s<+\infty, \label{norm2}
				\end{equation}
				where
				$$\left\Vert f \right\Vert_{\infty,t}=\underset{-r \leq u \leq t}{\sup} \left| f(u)\right|.$$
				It is clear that $\parallel f \parallel_{\infty,\alpha,t}$, is non-decreasing in $t$ and it is one of strengths of this norm. 
				
				\noindent Given any $\epsilon >0$, we have the following inclusion 
				\begin{equation}\label{incl}
					C^{\alpha+\epsilon}([-r,T]) \subset W_{0}^{1,\alpha,\infty}.
				\end{equation}
				\noindent 3. $W_{[a,b]}^{1-\alpha,\infty}$ denotes the space of measurable functions $f : [a,b] \longrightarrow \mathbb{R}$ such that
				\begin{equation}
					\parallel f \parallel_{1-\alpha,\infty,[a,b]}= \underset{a \leq u <v \leq b}{\sup}\left( \dfrac{\vert f(v)-f(u) \vert}{( v-u )^{1-\alpha}} + \int_{u}^{v}\dfrac{\vert f(y)-f(u) \vert}{(y-u) ^{2-\alpha}}\, \mathrm{d}y\right)<+\infty.
				\end{equation}
				\noindent 4. Finally, $W_{[a,b]}^{\alpha,1}$ is the space of measurable functions $f : [a,b] \longrightarrow \mathbb{R}$ such that
				\begin{equation}
					\parallel f \parallel_{\alpha,1,[a,b]}:=\int_{a}^{b}\dfrac{\vert f(s)\vert}{(s-a)^{\alpha}}\, \mathrm{d}s+\int_{a}^{b}\int_{a}^{s}\dfrac{\vert f(s)-f(y) \vert}{(s-y) 	^{1+\alpha}}\,\mathrm{d}y\,\mathrm{d}s <+\infty.
				\end{equation}
				Now we give a brief review of the deterministic fractional calculus. The Weyl-Marchaud derivatives of $f:[a,b]\longrightarrow\mathbb{R}^{n}$ are given
				by:
				\[
				D_{a+}^{\alpha}f(x)=\dfrac{1}{\Gamma(1-\alpha)}\left(\dfrac{f(x)}{\left(x-a\right)^{\alpha}}+\alpha\int_{a}^{x}\dfrac{f(x)-f(y)}{\left(x-y\right)^{\alpha+1}}\, \mathrm{d}y\right)
				{1\!\!}1_{\left(a,b\right)}(x)
				\]
				and 
				\[
				D_{b-}^{\alpha}f(x)=\dfrac{\left(-1\right)^{\alpha}}{\Gamma(1-\alpha)}\left(\dfrac{f(x)}{\left(b-x\right)^{\alpha}}+\alpha\int_{x}^{b}\dfrac{f(x)-f(y)}{\left(y-x\right)^{\alpha+1}}\, \mathrm{d}y\right){1\!\!}1_{\left(a,b\right)}(x),
				\]
				where $\Gamma(\alpha) =\int_0^{\infty} t^{\alpha -1} e^{-t}dt$ is the Gamma function. Assuming that $D_{a+}^{\alpha}f\in L^{1}[a,b]$ and $D_{b-}^{1-\alpha}g_{b-}\in L^{\infty}[a,b]$,
				where $g_{b-}(x)=g(x)-g(b{-})$, the generalized (fractional) Lebesgue-Stieltjes integral of $f$ with respect to $g$, following the work of Z{\"a}hle \cite{16}, is defined as
				\begin{equation}
					\int_{a}^{b}f\, dg:=(-1)^{\alpha}\int_{a}^{b}\, D_{a+}^{\alpha}f(x)\, D_{b-}^{1-\alpha}g_{b-}(x)\, \mathrm{d}x.\label{eq:frac int}
				\end{equation}
				If $a\leq c<d\leq b$ then we have 
				\[
				\int_{c}^{d}f\, dg:=\int_{a}^{b} {1\!\!}1_{(c,d)}f\, \mathrm{d}g.
				\]
				Then for $g\in	W_{[a,b]}^{1-\alpha,\infty}$ and $f\in W_{[a,b]}^{\alpha,1}$, we have the estimate
				\begin{equation}
					\left| \int_{a}^{b}f\, \mathrm{d}g \right| \leq \Lambda_{\alpha}(g) \parallel f \parallel_{\alpha,1,[a,b]},\label{Int-est}
				\end{equation}
				where
				\begin{align*}
				\Lambda_{\alpha}(g):&= \dfrac{1}{\Gamma(1-\alpha)}\underset{a<s<t<b}{\sup}\left|D_{t-}^{1-\alpha}g_{t-}(s)\right| \\
				&\leq \dfrac{1}{\Gamma(1-\alpha)\Gamma(\alpha)}\left\Vert g \right\Vert_{1-\alpha,\infty,[a,b]}<+\infty.
				\end{align*}
				Now we define the integral with respect to fBm as a generalized Lebesgue-Stieltjes integral. It follows from the H\"older continuity of $B^{H}$ that $D_{b-}^{1-\alpha}B_{b-}^{H}\in L^{\infty}[a,b]$ almost surely (a.s.~for short). Then, for a function $f$ with $D_{a+}^{\alpha}f\in L^{1}[a,b]$, we can define the integral with respect to $B^{H}$ through (\ref{eq:frac int}). Since the random variable $\left\Vert B^{H}\right\Vert_{1-\alpha,\infty,[a,b]}$ has moments of all orders, see Lemma 7.5 in Nualart and Rascanu \cite{NR}, the stochastic integral with respect to the fBm satisfies the following estimate
				\begin{equation}\label{FBMint-est}
					\left|\int_{a}^{b}f(s)\, \mathrm{d}B^{H}(s)\right|
					\leq  \Lambda_{\alpha}(B^H) \parallel f \parallel_{\alpha,1,[a,b]}
				\end{equation}
				Throughout this paper we assume that the coefficients $b$, $\sigma_{W}$ and $\sigma_{H}$ satisfy the following hypotheses : 
				\begin{itemize}
					\item[$(Hb)$] $b : [0,T]\times \mathcal{C}_{r} \longrightarrow \mathbb{R}$, is a measurable function has linear growth with respect the second variable and lipschitz with respect the same variable, that is
					\begin{align*}
						\vert b(t,x) \vert &\leq L_{b}^{2}(1+\parallel x \parallel_{\mathcal{C}_{r}} ), \\
						\vert b(t,x)-b(t,y) \vert &\leq L_{b}^{1} \parallel x-y \parallel_{\mathcal{C}_{r}}. 
					\end{align*}			
					\item[$(H\sigma)$] The function $\sigma : [0,T]\times \mathcal{C}_{r} \longrightarrow \mathbb{R}$, is H\"older continuous in $t$ for some $\beta \in (0,1)$, lipschitz continuous with respect the second variable and has linear growth assumption in $x$, that is 
					\begin{align*}
						\vert \sigma(t,x) \vert &\leq L_{\sigma}^{1}(1+\parallel x \parallel_{\mathcal{C}_{r}} ), \\
						\vert \sigma(t,x)-\sigma(s,x)  \vert &\leq L_{\sigma}^{3}\vert t-s \vert^{\beta}\left(1+\parallel x \parallel_{\mathcal{C}_{r}}\right), \\
						\vert\sigma(t,x)-\sigma(t,y) \vert &\leq L_{\sigma}^{2}\parallel x-y \parallel_{\mathcal{C}_{r}}.
					\end{align*}
				\end{itemize}
			\section{Deterministic functional differential equation}
				In this section, we present deterministic results. We start by fixing $\alpha \in (0,\frac{1}{2})$ and $g\in	W_{[0,T]}^{1-\alpha,\infty}$. Moving forward, we establish a uniform partition of the interval $[0, T]$, using a step size denoted as $\delta_{n} = T/2^{n}$, where $\left\lbrace t_{i}^{n} = i\delta_{n}, i = 0,1,...,2^{n} \right\rbrace$. We define $k_{n}(s)$ as $i\delta_{n}$ for $s \in [0, T]$, where $i$ is the largest integer such that $i\delta_{n} \leq s < (i+1)\delta_{n}$. The approximation scheme for equation ~\eqref{equ} is then constructed as follows.
				\begin{equation}
						\left\{ \begin{array}{lll}
							x^{n}(t) & = & \eta(0)+ \dint_{0}^{t}b\left(k_{n}(s),x_{k_{n}(s)}^{n}\right)\,\mathrm{d}s+\dint_{0}^{t}\sigma\left(k_{n}(s),x_{k_{n}(s)}^{n}\right)\,\mathrm{d}g_{s}+\underset{0 \leq s \leq t}{\sup}\left(z^{n}(s)^{-}\right),\,\, t\in[0,T], \\
							x^n(t) & = & \eta(t),\,\,t\in[-r,0].\label{app sch} 
						\end{array}\right.
					\end{equation}			
					With
					\begin{equation}
						\left\{ \begin{array}{lll}
						z^{n}(t) & = & \eta(0)+ \dint_{0}^{t}b\left(k_{n}(s),x_{k_{n}(s)}^{n}\right)\,\mathrm{d}s+\dint_{0}^{t}\sigma\left(k_{n}(s),x_{k_{n}(s)}^{n}\right)\,\mathrm{d}g_{s},\,\, t\in[0,T], \\
						z^n(t) & = & \eta(0),\,\,t\in[-T,0].\label{app sch} 
						\end{array}\right.
					\end{equation}		
					\begin{lemma}
						Let $\theta \in (0,1)$. Then for all $y \in [0,T]$ the following estimate
									\begin{equation}
										\left\Vert x_{y}^{n}-x_{k_{n}(y)}^{n} \right\Vert_{\mathcal{C}_r} \leq C(1+\Lambda_{\alpha}(g))\delta_{n}^{\theta\wedge(1-\alpha)}\left(1+\parallel x^{n} \parallel_{\infty,k_{n}(y)}\right), \label{est incre}
									\end{equation}
						holds.
					\end{lemma}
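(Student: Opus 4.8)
The plan is to reduce the segment estimate to a one-sided time increment of $x^{n}$ and to control that increment through the Skorokhod decomposition together with the pathwise estimate \eqref{Int-est}. Writing $h=y-k_{n}(y)\in[0,\delta_{n})$, one has
\begin{equation*}
\left\Vert x_{y}^{n}-x_{k_{n}(y)}^{n}\right\Vert_{\mathcal{C}_{r}}=\sup_{s\in[-r,0]}\left| x^{n}(y+s)-x^{n}(k_{n}(y)+s)\right|,
\end{equation*}
so the quantity to bound is merely a shift of $x^{n}$ by the small amount $h$. I would split the supremum according to the sign of the two time arguments relative to $0$: when both $y+s$ and $k_{n}(y)+s$ are $\le0$ the two values are $\eta(y+s)$ and $\eta(k_{n}(y)+s)$, and the $\theta$-H\"older continuity of $\eta$ gives a bound $[\eta]_{\theta}h^{\theta}\le[\eta]_{\theta}\delta_{n}^{\theta}$; when both are $\ge0$ we must estimate a genuine increment of $x^{n}$ over a time gap $\le\delta_{n}$; the crossing case is handled by inserting $x^{n}(0)=\eta(0)$ and combining the two previous bounds.

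The core step is then to prove, for $0\le s<t\le T$ with $t-s\le\delta_{n}$, an estimate $|x^{n}(t)-x^{n}(s)|\le C(1+\Lambda_{\alpha}(g))\delta_{n}^{1-\alpha}(1+\|x^{n}\|_{\infty,k_{n}(y)})$. Using the decomposition $x^{n}(t)=z^{n}(t)+\sup_{0\le w\le t}\left(z^{n}(w)\right)^{-}$ and the fact that the regulator is the Skorokhod map of $z^{n}$, the increment of the regulator on $[s,t]$ is dominated by the oscillation $\sup_{s\le w\le t}|z^{n}(w)-z^{n}(s)|$; this uses only that the negative-part map is $1$-Lipschitz and that $\sup_{0\le w\le s}\left(z^{n}(w)\right)^{-}\ge\left(z^{n}(s)\right)^{-}$. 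Hence $|x^{n}(t)-x^{n}(s)|\le2\sup_{s\le w\le t}|z^{n}(w)-z^{n}(s)|$, and it remains to bound the oscillation of $z^{n}$, i.e. the drift and fractional integrals over subintervals of $[s,t]$. The drift is immediate from the linear growth in $(Hb)$, producing a factor $\delta_{n}$ that is absorbed into $\delta_{n}^{1-\alpha}$.

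For the fractional integral I would apply \eqref{Int-est} with $f(\cdot)=\sigma(k_{n}(\cdot),x^{n}_{k_{n}(\cdot)})$ and estimate $\|f\|_{\alpha,1,[s,w]}$. The first term $\int(r-s)^{-\alpha}|f(r)|\,dr$ is controlled by the linear growth in $(H\sigma)$ and yields the factor $\delta_{n}^{1-\alpha}(1+\|x^{n}\|_{\infty,k_{n}(y)})$. The delicate term is the double integral $\int\!\!\int(r-v)^{-1-\alpha}|f(r)-f(v)|\,dv\,dr$: a naive bound would require a modulus of continuity of $x^{n}$, which is exactly what we are trying to establish, making the argument circular. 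The observation that breaks the circularity is that $f$ is piecewise constant, jumping only at the grid points $t_{i}^{n}$; since $[s,t]$ has length $\le\delta_{n}$ it contains at most one such point, so $f$ takes at most two values on $[s,t]$ and the double integral collapses to a single jump contribution $\Delta\int\!\!\int_{\{v<t_{j}^{n}\le r\}}(r-v)^{-1-\alpha}\,dv\,dr\le\frac{\Delta}{\alpha(1-\alpha)}\delta_{n}^{1-\alpha}$, where $\Delta=|\sigma(t_{j}^{n},x^{n}_{t_{j}^{n}})-\sigma(t_{j-1}^{n},x^{n}_{t_{j-1}^{n}})|$. Bounding $\Delta$ by the linear growth \emph{alone} gives $\Delta\le2L_{\sigma}^{1}(1+\|x^{n}\|_{\infty,k_{n}(y)})$, so this term is again of order $\delta_{n}^{1-\alpha}(1+\|x^{n}\|_{\infty,k_{n}(y)})$ with no H\"older information on $x^{n}$.

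Finally I would assemble the pieces: every spatial factor is bounded by $1+\|x^{n}\|_{\infty,k_{n}(y)}$ using the monotonicity of $t\mapsto\|x^{n}\|_{\infty,t}$ together with the fact that all grid points involved lie below $k_{n}(y)$, while the powers $\delta_{n}^{\theta}$ (from $\eta$) and $\delta_{n}^{1-\alpha}$ (from the integrals) are both dominated by $\delta_{n}^{\theta\wedge(1-\alpha)}$ for $\delta_{n}\le1$, the finitely many remaining $n$ being absorbed into the constant $C$. The hard part will be the double-integral term just described; once its resolution via the piecewise-constant structure of the Euler integrand is in place, the remainder is routine bookkeeping.
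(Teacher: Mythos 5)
Your proposal is correct and follows essentially the same route as the paper: the same three-way split of the supremum according to the sign of the shifted time arguments, the $\theta$-H\"older bound on $\eta$ for the negative part, control of the regulator increment through the $1$-Lipschitz property of $x\mapsto x^{-}$ and the sup structure of the Skorokhod map, and—crucially—the same observation that the piecewise-constant Euler integrand makes the double-integral term of $\|\cdot\|_{\alpha,1}$ collapse to at most one grid-point jump, bounded by linear growth alone, which is exactly how the paper avoids circularity. Your oscillation bound $|l(t)-l(s)|\leq\sup_{s\leq w\leq t}|z^{n}(w)-z^{n}(s)|$ is a slightly cleaner packaging of the paper's shifted-supremum comparison, but it is the same idea and yields the same $\delta_{n}^{\theta\wedge(1-\alpha)}$ rate.
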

				\begin{proof}
					Let $y \in [0,T]$, we have $$\left\Vert x_{y}^{n}-x_{k_{n}(y)}^{n} \right\Vert_{\mathcal{C}_r} = \underset{-r \leq v \leq 0}{\sup}\left|x^{n}(y+v)-x^{n}(k_{n}(y)+v)\right|.$$ We partition the above quantity into three segments based on the position of $v$ within the interval $[-r,0]$. Each segment will be analyzed individually. First if $y \leq -v $, given that $\eta$ is $\theta$-H\"older continuous, this implies 
							\begin{align*}
								\left|x^{n}(y+v)-x^{n}(k_{n}(y)+v)\right|& = \left|\eta(y+v)-\eta(k_{n}(y)+v)\right| \\
								&\leq  \mathfrak{H}_{\theta,[-r,0]}(\eta) \delta_{n}^{\theta}.
							\end{align*}
					Second if $k_{n}(y) \leq -v \leq y$, we have
							\begin{align*}
								\left|x^{n}(y+v)-x^{n}(k_{n}(y)+v)\right| &\leq \left|x^{n}(y+v)-x^{n}(0)\right|+\left|x^{n}(0)-x^{n}(k_{n}(y)+v)\right| \\
								&\leq \left|x^{n}(y+v)-x^{n}(0)\right|+ \mathfrak{H}_{\theta,[-r,0]}(\eta)\left|y-k_{n}(y)\right|^{\theta} \\
								& \leq \left|x^{n}(y+v)-x^{n}(0)\right|+ \mathfrak{H}_{\theta,[-r,0]}(\eta) \delta_{n}^{\theta}.
							\end{align*}
					Using the fact that the fonction $x \rightarrow x^-$ is subadditive and by a few simple manipulations we get that 
							\begin{align*}
								\left|x^{n}(y+v)-x^{n}(0)\right| &= \int_{0}^{y+v}b\left(k_{n}(s),x_{k_{n}(s)}^{n}\right)\mathrm{d}s+\int_{0}^{y+v}\sigma\left(k_{n}(s),x_{k_{n}(s)}^{n}\right)\mathrm{d}g_{s}+\underset{0 \leq u \leq y+v}{\sup}\left(z^{n}(u)^{-}\right) \\
								&\leq 2 \left\lbrace \underset{0 \leq u \leq y+v}{\sup}\left|\int_{0}^{u}b\left(k_{n}(s),x_{k_{n}(s)}^{n}\right)\mathrm{d}s\right|
								+ \underset{0 \leq u \leq y+v}{\sup}\left|\int_{0}^{u}\sigma\left(k_{n}(s),x_{k_{n}(s)}^{n}\right)\mathrm{d}g_{s}\right|	\right\rbrace \\
								&=2\left(A_{1}+A_{2}\right).
							\end{align*}
					Let's start with the estimation of the term $A_{1}$. Using the linear growth assumption of $b$ and the change of variables $z=s+k_{n}(y)$,  leads to 
							\begin{align*}
								A_{1} &\leq C\int_{0}^{y+v}\left(1+\parallel x_{k_{n}(s)}^{n} \parallel_{\mathcal{C}_r} \right) \mathrm{d}s \\
								&\leq \int_{k_{n}(y)}^{y+v+k_{n}(y)}\left(1+\left\Vert x^{n} \right\Vert_{\infty,k_{n}(z)} \right) \mathrm{d}z \\
								&\leq \int_{k_{n}(y)}^{y}\left(1+\left\Vert x^{n} \right\Vert_{\infty,k_{n}(z)} \right) \mathrm{d}z
							\end{align*}
					Hence $$A_{1} \leq C\left(y-k_{n}(y)\right)\left(1+ \left\Vert x^{n} \right\Vert_{\infty,k_{n}(y)}\right).$$ Now for $A_{2}$, fisrt set $f(s)=\sigma\left(k_{n}(s),x_{k_{n}(s)}^{n}\right)$. Using the estimate \eqref{Int-est}, the linear growth assumption of $b$ and the change of variables $z=s+k_{n}(y)$, it yields 
							\begin{align*}
								\left|\int_{0}^{u}f(s)\mathrm{d}g_{s}\right|&\leq \Lambda_{\alpha}(g)\int_{0}^{u}\left(\left|f(s)\right|s^{-\alpha}+\int_{0}^{s}\left|f(s)-f(z)\right|(s-z)^{-\alpha-1}\mathrm{d}z\right)\mathrm{d}s \\
								&\leq \int_{k_{n}(y)}^{y}\left[\left(1+\parallel x^{n} \parallel_{\infty,k_{n}(x)}\right)(x-k_{n}(y))^{-\alpha}\right.\\
								&\left.+\int_{0}^{x-k_{n}(y)}\left|f(x-k_{n}(y))-f(z)\right|(x-k_{n}(y)-z)^{-\alpha-1}\mathrm{d}z\right] \mathrm{d}x,
							\end{align*}	
					for any $u \in [0,y+v] $. It's easy to see that	
							$$
								\int_{0}^{x-k_{n}(y)}\left|f(x-k_{n}(y))-f(z)\right|(x-k_{n}(y)-z)^{-\alpha-1}\mathrm{d}z=0,\quad \forall x \in [k_{n}(y),y[,
							$$
					 then  
					 		$$
					 			A_{2} \leq C  \Lambda_{\alpha}(g)(y-k_{n}(y))^{1-\alpha}\left(1+\parallel x^{n} \parallel_{\infty,k_{n}(y)}\right).
					 		$$ 
					 Finaly if $-v \leq k_{n}(y)$, we have 		
					 		$$
					 			\left|x^{n}(y+v)-x^{n}(k_{n}(y)+v)\right|=\left|z^{n}(y+v)-z^{n}(k_{n}(y)+v)+\underset{0 \leq u \leq y+v}{\sup}\left(z^{n}(u)^{-}\right)-\underset{0 \leq u \leq k_{n}(y)+v}{\sup}\left(z^{n}(u)^{-}\right)\right|.
					 		$$ 
					 First let us not that 
					 		$$
					 			\underset{0\leq u \leq k_{n}(y)+v}{\sup}\left(z^{n}(u)^{-}\right)\geq \underset{-v \leq u \leq y}{\sup}\left(z^{n}\left((k_{n}(u)+v)\vee 0\right)^{-}\right).
					 		$$		
					 Fromwhere 
							$$
								\left|x^{n}(y+v)-x^{n}(k_{n}(y)+v)\right| \leq 2\underset{-v \leq u \leq y}{\sup}\left|z^{n}(u+v)-z^{n}\left((k_{n}(u)+v)\vee 0\right)\right|.
							$$ 
					 Let $ -v \leq u \leq y$. First case if $k_{n}(u)+v \leq 0$, following the same steps as $A_{1}$ and $A_{2}$ we obtain that 
					 		$$
					 			\left|z^{n}(u+v)-z^{n}(0)\right| \leq C\left\lbrace \left((u-k_{n}(y))+\Lambda_{\alpha}(g)(u-k_{n}(u))^{1-\alpha}\right) \left(1+\parallel x^{n} \parallel_{\infty,k_{n}(u)}\right)\right\rbrace.
					 		$$ 
					 Second case if $k_{n}(u)+v>0$, we have
								\begin{align*}
									\left|z^{n}(u+v)-z^{n}(k_{n}(u)+v)\right| &\leq \left|\int_{k_{n}(u)+v}^{u+v}b\left(k_{n}(s),x_{k_{n}(s)}^{n}\right)\mathrm{d}s\right|+\left|\int_{k_{n}(u)+v}^{u+v}\sigma\left(k_{n}(s),x_{k_{n}(s)}^{n}\right)\mathrm{d}g_{s}\right| \\
									&=I_{1}+I_{2}.
								\end{align*}
					Using the linear growth assumption in $(Hb)$ and the change of variable to $z=s-v$, we obtain 
								\begin{align*}
									I_{1} &\leq C \int_{k_{n}(u)+v}^{u+v}\left(1+\left\Vert x^{n} \right\Vert_{\infty,k_{n}(s)}\right)\mathrm{d}s \\
									&\leq C\int_{k_{n}(u)}^{u}\left(1+\parallel x^{n} \parallel_{\infty,k_{n}(z)}\right)\mathrm{d}z \\
									&\leq C(u-k_{n}(u))\left(1+\parallel x^{n} \parallel_{\infty,k_{n}(u)}\right).
								\end{align*}				
					For $I_{2}$, using for the second time the estimate \eqref{Int-est}, and changing the integration variable we obtain 
								\begin{align*}
									\left| \int_{k_{n}(u)+v}^{u+v}f(s) \mathrm{d}g_{s} \right| &\leq \Lambda_{\alpha}(g)\left(\int_{k_{n}(u)}^{u}\left(1+\parallel x^{n} \parallel_{\infty,k_{n}(x)} \right)(x-k_{n}(u))^{-\alpha}\mathrm{d}x \right. \\
									&\left.+\int_{k_{n}(u)+v}^{u+v}\int_{k_{n}(u)+v}^{s}\left|f(s)-f(z)\right|(s-z)^{-\alpha-1}\mathrm{d}z\mathrm{d}s \right) \\
									&=\Lambda_{\alpha}(g)(I_{21}+I_{22})
								\end{align*}			
					 Let us assume that $t_{j}^{n} \leq -v <t_{j+1}^{n}$ and $t_{i}^{n} \leq u <t_{i+1}^{n}$ where $j=[\frac{-v}{\delta_{n}}]$ and $i=[\frac{u}{\delta_{n}}]$. We can choose $j+1 \leq i$ for the sheer fact that $-v < k_n(u) \Leftrightarrow k_{n}(-v)+\delta_n \leq u$. Hence $u+v \in ]t_{i-j-1}^{n}, t_{i-j+1}^{n}[ $ and $k_{n}(u)+v \in ]t_{i-j-1}^{n}, t_{i-j}^{n}]$. If $u+v \in ]t_{i-j-1},t_{i-j}^{n}[$, then $I_{22}=0$. From now we suppose $ u+v \in [t_{i-j}, t_{i-j+1}^{n}[$, furthermore
							\begin{align*}
								I_{22}&=\int_{k_{n}(u)+v}^{t_{i-j}^{n}}\int_{k_{n}(u)+v}^{s}\left|f(s)-f(z)\right|(s-z)^{-\alpha-1}\mathrm{d}z\mathrm{d}s+\int_{t_{i-j}^{n}}^{u+v}\int_{k_{n}(u)+v}^{s}\left|f(s)-f(z)\right|(s-z)^{-\alpha-1}\mathrm{d}z\mathrm{d}s \\
								&=\int_{t_{i-j}^{n}}^{u+v}\int_{k_{n}(u)+v}^{s}\left|f(s)-f(z)\right|(s-z)^{-\alpha-1}\mathrm{d}z\mathrm{d}s \\
								&=\int_{t_{i-j}^{n}}^{u+v}\int_{k_{n}(u)+v}^{t_{i-j}^{n}}\left|f(s)-f(z)\right|(s-z)^{-\alpha-1}\mathrm{d}z\mathrm{d}s+\int_{t_{i-j}^{n}}^{u+v}\int_{t_{i-j}^{n}}^{s}\left|f(s)-f(z)\right|(s-z)^{-\alpha-1}\mathrm{d}z\mathrm{d}s \\
								&=\int_{t_{i-j}^{n}}^{u+v}\left|f(t_{i-j}^{n})-f(t_{i-j-1}^{n})\right|\int_{k_{n}(u)+v}^{t_{i-j}^{n}}(s-z)^{-\alpha-1}\mathrm{d}z\mathrm{d}s \\
								&\leq C\left(1+\parallel x^{n} \parallel_{\infty,k_{n}(u)}\right)\int_{t_{i-j}^{n}}^{u+v}(s-t_{i-j}^{n})^{-\alpha}\mathrm{d}s.
							\end{align*} 
					Changing the integration variable to $x=s-t_{i-j}^{n}+k_{n}(u)$, we obtain 
								\begin{align*}
									\int_{t_{i-j}^{n}}^{u+v}(s-t_{i-j}^{n})^{-\alpha}\mathrm{d}s&=\int_{k_{n}(u)}^{u+v-t_{i-j}^{n}+k_{n}(u)}(x-k_{n}(u))^{-\alpha}\mathrm{d}x \\
									&\leq \int_{k_{n}(u)}^{u}(x-k_{n}(u))^{-\alpha}\mathrm{d}x=C(u-k_{n}(u))^{1-\alpha}. \\
								\end{align*}
					Combining all the above estimate leads to $$\underset{-r \leq v \leq 0}{\sup}\left|x^{n}(y+v)-x^{n}(k_{n}(y)+v)\right| \leq C(1+\Lambda_{\alpha}(g))\delta_{n}^{\theta\wedge(1-\alpha)}\left(1+\parallel x^{n} \parallel_{\infty,k_{n}(y)}\right).$$
				\end{proof}
					\begin{proposition}\label{est-frac}
						Let $0 < \alpha < \theta \wedge \frac{1}{2} \wedge \beta $. Then for all $0 \leq s < t \leq T$, we have 
								$$
									 \left| \int_{s}^{t}\sigma\left(k_{n}(u),x_{k_{n}(u)}^{n}\right)\mathrm{d}g_{u} \right| \leq C(1+\Lambda_{\alpha}(g))\int_{s}^{t}(u-s)^{-\alpha}\left(1+\left\Vert x^{n} \right\Vert_{\infty,\alpha,k_{n}(u)}\right)\mathrm{d}u.
								$$
					\end{proposition}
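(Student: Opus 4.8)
The plan is to reduce the statement to the pathwise estimate \eqref{Int-est} and then control the $\Vert\cdot\Vert_{\alpha,1}$-norm of the (piecewise constant in $u$) integrand. Writing $f(u):=\sigma(k_{n}(u),x^{n}_{k_{n}(u)})$ and applying \eqref{Int-est} on $[s,t]$ gives
\[
\left|\int_{s}^{t} f(u)\,\mathrm{d}g_{u}\right| \le \Lambda_{\alpha}(g)\left(\int_{s}^{t}\frac{|f(u)|}{(u-s)^{\alpha}}\,\mathrm{d}u + \int_{s}^{t}\int_{s}^{u}\frac{|f(u)-f(y)|}{(u-y)^{1+\alpha}}\,\mathrm{d}y\,\mathrm{d}u\right),
\]
so it suffices to dominate both integrals by $C\int_{s}^{t}(u-s)^{-\alpha}\bigl(1+\Vert x^{n}\Vert_{\infty,\alpha,k_{n}(u)}\bigr)\,\mathrm{d}u$, after which the prefactor is absorbed using $\Lambda_{\alpha}(g)\le 1+\Lambda_{\alpha}(g)$. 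The first integral is immediate: the linear growth in $(H\sigma)$ gives $|f(u)|\le L_{\sigma}^{1}\bigl(1+\Vert x^{n}\Vert_{\infty,k_{n}(u)}\bigr)\le L_{\sigma}^{1}\bigl(1+\Vert x^{n}\Vert_{\infty,\alpha,k_{n}(u)}\bigr)$, since $\Vert\cdot\Vert_{\infty,t}\le\Vert\cdot\Vert_{\infty,\alpha,t}$ by \eqref{norm2}.

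The difficulty sits entirely in the double integral. Here I would first exploit that $f$ is constant on each grid cell $[t_{i}^{n},t_{i+1}^{n})$: whenever $k_{n}(u)=k_{n}(y)$ one has $f(u)-f(y)=0$, so for a.e. $u$ (i.e. $u$ not a grid point) the inner integrand vanishes for $y$ near $u$, the effective range of integration is $[s,k_{n}(u)]$, and on it $u-y\ge u-k_{n}(u)>0$, which removes the singularity at $y=u$. On this range I would split the increment via the triangle inequality and $(H\sigma)$,
\[
|f(u)-f(y)| \le L_{\sigma}^{3}\,|k_{n}(u)-k_{n}(y)|^{\beta}\bigl(1+\Vert x^{n}\Vert_{\infty,k_{n}(u)}\bigr) + L_{\sigma}^{2}\,\Vert x^{n}_{k_{n}(u)}-x^{n}_{k_{n}(y)}\Vert_{\mathcal{C}_{r}},
\]
and estimate the time part and the space part separately.

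For the time part I would use $\beta>\alpha$: decomposing $[s,k_{n}(u)]$ into grid cells and integrating $(u-y)^{-1-\alpha}$ explicitly, the factor $|k_{n}(u)-k_{n}(y)|^{\beta}$ is comparable to $(u-y)^{\beta}$ up to a boundary layer of width $\delta_{n}$; integrating in $u$ over each cell tames the (integrable) near-diagonal blow-up $(u-k_{n}(u))^{-\alpha}$ because $\beta-\alpha>0$ and the domain is bounded by $T$, yielding $C\int_{s}^{t}(u-s)^{-\alpha}(1+\Vert x^{n}\Vert_{\infty,k_{n}(u)})\,\mathrm{d}u$. For the space part I would bound the $\mathcal{C}_{r}$-increment by a genuine time increment of $x^{n}$,
\[
\left\Vert x^{n}_{k_{n}(u)}-x^{n}_{k_{n}(y)}\right\Vert_{\mathcal{C}_{r}} \le \left\Vert x^{n}(\,\cdot+h)-x^{n}(\,\cdot)\right\Vert_{\infty,k_{n}(y)}, \qquad h=k_{n}(u)-k_{n}(y),
\]
which is exactly the quantity entering the increment term of the norm \eqref{norm2} at level $k_{n}(u)$ (shift $h$, weight $h^{-\alpha-1}$). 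The task is then to show that, after the grid decomposition of the $y$-integral and integration in $u$, this double integral is dominated by the increment integral defining $\Vert x^{n}\Vert_{\infty,\alpha,k_{n}(u)}$, invoking the monotonicity of $t\mapsto\Vert x^{n}\Vert_{\infty,\alpha,t}$.

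I expect the space part to be the main obstacle: the weight $(u-y)^{-1-\alpha}$ does not match the norm's weight $(k_{n}(u)-k_{n}(y))^{-1-\alpha}$, and the mismatches $u\leftrightarrow k_{n}(u)$ and $y\leftrightarrow k_{n}(y)$ create a boundary layer near the grid points where a naive pointwise-in-$u$ bound fails (the ratio $\bigl((k_{n}(u)-k_{n}(y))/(u-y)\bigr)^{1+\alpha}$ is unbounded as $y\to k_{n}(u)$). The resolution is to avoid any pointwise estimate and work cell by cell in $u\in[t_{i}^{n},t_{i+1}^{n})$, where the localized blow-up is integrable and, summed against the decay of the increments encoded in \eqref{norm2}, reassembles $\int_{s}^{t}(u-s)^{-\alpha}\Vert x^{n}\Vert_{\infty,\alpha,k_{n}(u)}\,\mathrm{d}u$. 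Collecting the three contributions and absorbing $\Lambda_{\alpha}(g)\le 1+\Lambda_{\alpha}(g)$ then yields the claimed estimate.
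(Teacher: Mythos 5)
Your reduction via \eqref{Int-est} and your treatment of the first integral coincide with the paper's, and your identification of the effective range $[s,k_n(u)]$ of the inner integral is also the paper's first move. The gap is in the space part of the double integral, exactly where you yourself locate the difficulty. By splitting $|f(u)-f(y)|$ directly into grid-to-grid increments you are left having to dominate, for $u$ in a fixed cell, the quantity $\sum_{j}\left\Vert x^{n}\bigl((i-j)\delta_n+\cdot\bigr)-x^{n}(\cdot)\right\Vert_{\infty,t_j^n}\,\delta_n\,(\cdot)^{-1-\alpha}$ by the increment integral in \eqref{norm2} evaluated at $k_n(u)$. That integral samples the shifted sup-norms at \emph{all} shifts $h=k_n(u)-w$ with weight $h^{-1-\alpha}\,\mathrm{d}w$, whereas your sum samples them only at the discrete shifts $(i-j)\delta_n$; since $h\mapsto\Vert x^{n}(h+\cdot)-x^{n}(\cdot)\Vert_{\infty,\cdot}$ is neither monotone in $h$ nor a priori regular, a Riemann-sum comparison is not available, and the norm does not control individual increments at prescribed shifts. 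Saying the blow-up is "integrable cell by cell" handles the weight singularity but not this sampling problem, so the step "reassembles $\int_s^t(u-s)^{-\alpha}\Vert x^n\Vert_{\infty,\alpha,k_n(u)}\,\mathrm{d}u$" is asserted rather than proved.

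The paper avoids the mismatch by inserting the \emph{undiscretized} integrand at the intermediate point: with $h(y)=\sigma(y,x^n_y)$ it writes $|h(k_n(z))-h(k_n(y))|\le|h(k_n(z))-h(y)|+|h(y)-h(k_n(y))|$. In the first piece the space increment is $\Vert x^n_{k_n(z)}-x^n_{y}\Vert_{\mathcal{C}_r}\le\Vert x^n(k_n(z)-y+\cdot)-x^n(\cdot)\Vert_{\infty,y}$, and after upgrading the weight to $(k_n(z)-y)^{-1-\alpha}$ the inner $y$-integral is \emph{exactly} the increment integral defining $\Vert x^n\Vert_{\infty,\alpha,k_n(z)}$ — no discrete-versus-continuous comparison is needed. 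The second piece is a within-cell increment, which is bounded by Lemma 3.1, i.e.\ by \eqref{est incre}, giving a uniform factor $C(1+\Lambda_\alpha(g))\delta_n^{\theta\wedge(1-\alpha)\wedge\beta}$ whose positive power of $\delta_n$ (here $\alpha<\theta\wedge\beta\wedge\tfrac12$ is used) absorbs the $(k_n(y)+\delta_n-y)^{-\alpha}$ singularity near the grid points. Note that this second piece is also the source of the factor $(1+\Lambda_\alpha(g))$ in the statement; your sketch never produces that factor, which is a further sign that the essential ingredient — the intermediate comparison with $h(y)$ together with \eqref{est incre} — is missing from your argument. Your time part (using $\beta>\alpha$) is fine and matches the paper's.
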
	
				\begin{proof}
					 Let $0 \leq s < t \leq T$ and define $h(s)=\sigma(s,x_{s}^n)$. This inequality can be readily derived when both s and t fall within the interval $[t_{i}^{n},t_{i+1}^{n}[$, to the sheer fact that $h(t_{i}^{n})=h(k_{n}(s))$ if s belongs to $[t_{i}^{n},t_{i+1}^{n}[$. From this point onwards, we'll assume that $t_{i}^{n} \leq s < t_{i+1}^{n}\leq t_{j}^{n} \leq t < t_{j+1}^{n}$ with $i=\left[\frac{s}{\delta_{n}} \right]$ and $j=\left[\frac{t}{\delta_{n}} \right]$. Thanks to the linear growth assumption of $\sigma$ and the estimation \eqref{Int-est}, we can conclude
					\begin{align*}
						\left| \int_{s}^{t}h(k_{n}(u))\,dg_{u} \right| 
						&\leq \Lambda_{\alpha}(g)\left[ \int_{s}^{t}\left(1+\left\Vert x^{n}\right\Vert_{\infty,k_{n}(z)} \right)(z-s)^{-\alpha}dz\right. \\ &\left. + \int_{s}^{t}\int_{s}^{z}|h\left( k_{n}(z) \right)-h\left( k_{n}(y)\right) |(z-y)^{-\alpha-1}\, dy\,dz  \right] \\
						&=\left[ I_{1}+I_{2} \right].
					\end{align*}
					Starting with $I_{2}$. Given that $h(k_n(.))$ is piecewise constant, we have the following:
					\begin{align*}
						I_{2}&=\int_{k_{n}(s)+\delta_{n}}^{t}\int_{s}^{k_{n}(z)}|h\left( k_{n}(z) \right)-h\left( k_{n}(y) \right)|(z-y)^{-\alpha-1}\, dy\,dz  \\
						&\leq \int_{k_{n}(s)+\delta}^{t}\int_{s}^{k_{n}(z)}|h\left( k_{n}(z) \right)-h(y)|(z-y)^{-\alpha-1}\, dy\,dz \\
						&+\int_{k_{n}(s)+\delta_{n}}^{t}\int_{s}^{k_{n}(z)}|h(y)-h\left( k_{n}(y) \right)|(z-y)^{-\alpha-1}\, dy\,dz \\		
						& \leq \int_{k_{n}(s)+\delta_{n}}^{t}\int_{s}^{k_{n}(z)}|h\left( k_{n}(z) \right)-h(y)|(k_{n}(z)-y)^{-\alpha-1}\, dy\,dz  \\		
						& +\int_{k_{n}(s)+\delta_{n}}^{t}\int_{s}^{k_{n}(z)}|h(y)-h\left( k_{n}(y) \right)|(z-y)^{-\alpha-1}\, dy\,dz \\
						&= I_{21}+I_{22}.
					\end{align*}
					This can be deduced from the assumption $(H\sigma)$ and the condition $\alpha < \beta$, which implies that 
					\begin{align*}
									I_{21} &\leq C\int_{k_{n}(s)+\delta_{n}}^{t}\left(\int_{s}^{k_{n}(z)}(k_{n}(z)-y)^{\beta-\alpha-1}\left(1+\left\Vert x_{k_{n}(z)}^{n}\right\Vert_{\mathcal{C}_r}\right)\mathrm{d}y \right.\\ &\left.+\int_{s}^{k_{n}(z)}\left\Vert x_{k_{n}(z)}^{n}-x_{y}^{n} \right\Vert_{\mathcal{C}_r}(k_{n}(z)-y)^{-\alpha-1}\mathrm{d}y\right)\mathrm{d}z \\
									&\leq C\int_{k_{n}(s)+\delta_{n}}^{t}\left(\int_{s}^{k_{n}(z)}(k_{n}(z)-y)^{\beta-\alpha-1}\left(1+\left\Vert x_{k_{n}(z)}^{n}\right\Vert_{\mathcal{C}_r}\right)\mathrm{d}y \right.\\
									&\left.+\int_{s}^{k_{n}(z)}\left\Vert x^{n}(k_{n}(z)-y+.)-x^{n}(.) \right\Vert_{\infty,y}(k_{n}(z)-y)^{-\alpha-1}\mathrm{d}y\right)\mathrm{d}z,														
					\end{align*}
					leadin to 
					\begin{align*}
					I_{21}	&\leq C\int_{k_{n}(s)+\delta_{n}}^{t}\left( (k_{n}(z)-s)^{\beta-\alpha}\left(1+\left\Vert x_{k_{n}(z)}^{n}\right\Vert_{\mathcal{C}_r}\right)+\left\Vert x^{n} \right\Vert_{\infty,\alpha,k_{n}(z)}\right) \mathrm{d}z \\
						&\leq C \int_{s}^{t}(z-s)^{-\alpha}\left(1+\left\Vert x^{n} \right\Vert_{\infty,\alpha,k_{n}(z)}\right)\mathrm{d}z.
					\end{align*}
					For $I_{22}$, we can break it down as follows 
						\begin{align*}
								I_{22}
								&= \int_{k_{n}(s)+\delta}^{t}\int_{s}^{k_{n}(s)+\delta_n}|h(y)-h\left( k_{n}(y) \right)|(z-y)^{-\alpha-1}\, dy\,dz \\
								& 
								+\int_{k_{n}(s)+\delta_n}^{t}\int_{k_{n}(s)+\delta_n}^{k_{n}(z)}|h(y)-h\left( k_{n}(y) \right)|(z-y)^{-\alpha-1}\, dy\,dz \\
								& =I_{221}+I_{222}.
							\end{align*}
					On the other hand, utilizing estimate ~\eqref{est incre} and the assumption in $(H\sigma)$, we obtain 
								\begin{align}
									\left|h(y)-h(k_{n}(y))\right| &\leq C\left[\left\Vert x_{y}^{n}-x_{k_{n}(y)}^{n} \right\Vert_{\mathcal{C}_r} +(y-k_{n}(y))^{\beta}\left(1+\left\Vert x_{k_{n}(y)}^{n} \right\Vert_{\mathcal{C}_r}\right)\right] \nonumber \\
									&\leq C \left[(1+\Lambda_{\alpha}(g))\delta_{n}^{\theta\wedge (1-\alpha)}\left(1+\left\Vert x^{n}\right\Vert_{\infty,k_{n}(y)}\right)+\delta_{n}^{\beta}\left(1+\left\Vert x^{n} \right\Vert_{\infty,k_{n}(y)}\right)\right] \nonumber \\
									&\leq C(1+\Lambda_{\alpha}(g))\delta_{n}^{\theta\wedge (1-\alpha)\wedge \beta}\left(1+\left\Vert x^{n} \right\Vert_{\infty,k_{n}(y)}\right). \label{est incr 2}
								\end{align}	
					Therefore, we have 
							$$
								I_{221}\leq C(1+\Lambda_{\alpha}(g))\delta_{n}^{\theta\wedge (1-\alpha)\wedge \beta}\left(1+\parallel x^{n} \parallel_{\infty,k_{n}(s)}\right)\int_{k_{n}(s)+\delta_{n}}^{t}\int_{s}^{k_{n}(s)+\delta_{n}}(z-y)^{-\alpha-1}\mathrm{d}y\mathrm{d}z.
							$$ 
					Applying Fubini's theorem, a change of variables, and some straightforward calculations, we obtain  
								\begin{align*}
									I_{221} &\leq C(1+\Lambda_{\alpha}(g))\delta_{n}^{\theta\wedge (1-\alpha)\wedge \beta}\left(1+\parallel x^{n} \parallel_{\infty,k_{n}(s)}\right)\int_{s}^{k_{n}(s)+\delta_{n}}(k_{n}(s)+\delta_{n}-y)^{-\alpha} \mathrm{d}y \\
									&\leq C(1+\Lambda_{\alpha}(g))\delta_{n}^{\theta\wedge (1-\alpha)\wedge \beta}\left(1+\parallel x^{n} \parallel_{\infty,k_{n}(s)}\right)\int_{s}^{k_{n}(s)+\delta_{n}}(r-s)^{-\alpha} \mathrm{d}r \\
									&\leq C(1+\Lambda_{\alpha}(g))\int_{s}^{t}\left(1+\parallel x^{n} \parallel_{\infty,k_{n}(r)}\right)(r-s)^{-\alpha} \mathrm{d}r.
								\end{align*}		
					Finally, concerning $I_{222}$, it is noteworthy that if $i+1=j$, then $I_{222}=0$. We will now assume that $i+1<j$. Additionally, for any $y \in [t_{k}^{n},t_{k+1}^{n}[$, we have 
						$$
							\left|h(y)-h(k_{n}(y))\right|\leq C(1+\Lambda_{\alpha}(g))\delta_{n}^{\theta\wedge (1-\alpha)\wedge \beta}\left(1+\parallel x^{n} \parallel_{\infty,t_{k}^{n}}\right).
						$$
				 	Finally, by applying Fubini's theorem once again (where $y < k_{n}(z)\Leftrightarrow k_{n}(y)+\delta_{n} \leq z$) and taking into account that $k_{n}(.)$ is piecewise constant, we can derive 
								\begin{align*}
									I_{222} &\leq \int_{k_{n}(s)+\delta_{n}}^{k_{n}(t)}\left|h(y)-h(k_{n}(y))\right|\int_{k_{n}(y)+\delta_{n}}^{t}(z-y)^{-\alpha}\mathrm{d}z \mathrm{d}y \\
									&\leq \int_{k_{n}(s)+\delta_{n}}^{k_{n}(t)}\left|h(y)-h(k_{n}(y))\right|(k_{n}(y)+\delta_{n}-y)^{-\alpha}\mathrm{d}y
								\end{align*}
					By utilizing the inequality ~\eqref{est incr 2}, we obtain 
								\begin{align*}
									I_{222} &\leq C(1+\Lambda_{\alpha}(g))\delta_{n}^{\theta\wedge (1-\alpha)\wedge \beta} \sum_{k=i+1}^{j-1}\int_{t_{k}^{n}}^{t_{k+1}^{n}}\left(1+\parallel x^{n} \parallel_{\infty,t_{k}^{n}}\right)(t_{k+1}^{n}-y)^{-\alpha}\mathrm{d}y \\
									&\leq C(1+\Lambda_{\alpha}(g))\delta_{n}^{\theta\wedge (1-\alpha)\wedge \beta} \sum_{k=i+1}^{j-1}\left(1+\parallel x^{n} \parallel_{\infty,t_{k}^{n}}\right)\int_{t_{k}^{n}}^{t_{k+1}^{n}}(t_{k+1}^{n}-y)^{-\alpha}\mathrm{d}y \\
									&\leq C(1+\Lambda_{\alpha}(g))\delta_{n}^{(\theta-\alpha)\wedge (1-2\alpha)\wedge (\beta-\alpha)} \sum_{k=i+1}^{j-1}\left(1+\parallel x^{n} \parallel_{\infty,t_{k}^{n}}\right) \delta_{n} \\
									&\leq C(1+\Lambda_{\alpha}(g))\delta_{n}^{(\theta-\alpha)\wedge (1-2\alpha)\wedge (\beta-\alpha)}\int_{k_{n}(s)+\delta_{n}}^{k_{n}(t)}\left(1+\parallel x^{n} \parallel_{\infty,k_{n}(u)}\right) \mathrm{d}u.
								\end{align*}	
				Consequently we have 
							$$
								I_{222} \leq C(1+\Lambda_{\alpha}(g))\int_{s}^{t}\left(1+\left\Vert x^{n} \right\vert_{\infty,\alpha,k_{n}(u)}\right) \mathrm{d}u.$$ 
				Combining all the above estimate leads to 
							$$
								 \left| \int_{s}^{t}\sigma\left(k_{n}(u),x_{k_{n}(u)}^{n}\right)\mathrm{d}g_{u} \right| \leq C(1+\Lambda_{\alpha}(g))\int_{s}^{t}(u-s)^{-\alpha}\left(1+\left\Vert x^{n} \right\Vert_{\infty,\alpha,k_{n}(u)}\right) \mathrm{d}u.
							$$
				\end{proof}
				we also need the following technical lemma, which contains an inequality we'll need.
								
			\begin{lemma} \label{tech-est} 
				Let $f$ be a continuous function on $[0,T]$ such that $f(0)\geq0$,
					which we extend to $\left[-T,0\right]$ by
					\[
					f(t)=f(0).
					\]
				Let $g$ be the function given by 
					\[
					l(t)=\underset{s\in[0,t]}{\sup}(f(s))^{-},\,\,t\in[0,T].
					\]
					Then for any $0\leq s<t\leq T$, we have 
					\begin{equation}
						\left|l(t)-l(s)\right|\leq\underset{-T\leq u\leq s}{\sup}\left|f\left(t-s+u\right)-f(u)\right|.\label{est-s>0}
					\end{equation}
					\end{lemma}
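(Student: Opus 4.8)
The plan is to exploit two elementary facts: that $l$ is non-decreasing, so the absolute value on the left is superfluous, and that the negative-part map $x\mapsto x^{-}$ is $1$-Lipschitz, i.e. $|a^{-}-b^{-}|\le|a-b|$. First I would record the decomposition $l(t)=\max\bigl(l(s),\,\sup_{s\le v\le t}(f(v))^{-}\bigr)$, valid since the supremum defining $l(t)$ runs over $[0,t]=[0,s]\cup[s,t]$. Because $l(t)\ge l(s)$, one has $|l(t)-l(s)|=l(t)-l(s)$, so it suffices to prove $\sup_{s\le v\le t}(f(v))^{-}\le l(s)+R$, where $R:=\sup_{-T\le u\le s}|f(t-s+u)-f(u)|$ is the right-hand side of \eqref{est-s>0}; together with $l(s)\le l(s)+R$ this yields $l(t)\le l(s)+R$.

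Next I would fix $v\in[s,t]$ and change variable $u=v-(t-s)$, so that $v=u+(t-s)$ and $u$ ranges over $[2s-t,\,s]$ as $v$ ranges over $[s,t]$. The $1$-Lipschitz property of $x\mapsto x^{-}$ gives
\[
(f(v))^{-}=\bigl(f(u+(t-s))\bigr)^{-}\le (f(u))^{-}+\bigl|f(u+(t-s))-f(u)\bigr|.
\]
The crucial point is then to control $(f(u))^{-}$ uniformly for $u\in[2s-t,s]$ by $l(s)$. For $u\in[0,s]$ this is immediate from $l(s)=\sup_{0\le u'\le s}(f(u'))^{-}$. For $u<0$ (which occurs precisely when $2s-t<0$) the constant extension gives $f(u)=f(0)\ge0$, whence $(f(u))^{-}=0\le l(s)$. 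Thus $(f(u))^{-}\le l(s)$ throughout $[2s-t,s]$, and substituting back yields $(f(v))^{-}\le l(s)+|f(u+(t-s))-f(u)|$ for every $v\in[s,t]$.

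Finally I would take the supremum over $v\in[s,t]$, equivalently over $u\in[2s-t,s]$, to obtain $\sup_{s\le v\le t}(f(v))^{-}\le l(s)+\sup_{2s-t\le u\le s}|f(u+(t-s))-f(u)|$. Since $s\ge0$ forces $2s-t\ge -t\ge -T$, the interval $[2s-t,s]$ is contained in $[-T,s]$, so the last supremum is dominated by $R$ (for $u\in[-T,s]$ the argument $t-s+u$ lies in $[-T,t]\subseteq[-T,T]$, so $f(t-s+u)$ is well defined). This closes the chain of inequalities and proves \eqref{est-s>0}.

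I expect the only genuinely delicate step to be the uniform bound $(f(u))^{-}\le l(s)$ on the shifted interval $[2s-t,s]$: the shift by $-(t-s)$ can push $u$ below $0$, and it is exactly here that the hypothesis $f(0)\ge0$ together with the constant extension $f\equiv f(0)$ on $[-T,0]$ is indispensable, forcing the negative part to vanish there rather than contribute an uncontrolled term. The remaining manipulations — the decomposition of $l(t)$, the Lipschitz estimate, and the range inclusion $[2s-t,s]\subseteq[-T,s]$ — are routine.
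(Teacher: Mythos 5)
Your proof is correct and follows essentially the same route as the paper's: the shift $v\mapsto v-(t-s)$, the $1$-Lipschitz bound $(f(v))^{-}\le (f(u))^{-}+|f(u+(t-s))-f(u)|$, and the observation that the constant extension together with $f(0)\ge 0$ kills the negative part when the shifted point falls below $0$. The only (harmless) difference is that you work with the supremum decomposition $l(t)=\max\bigl(l(s),\sup_{s\le v\le t}(f(v))^{-}\bigr)$ directly, whereas the paper first extracts a maximizer $u_{0}$ of $f^{-}$ on $[0,t]$ via continuity and compactness and then argues pointwise at $v_{0}=u_{0}+s-t$.
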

				 \begin{proof}
				Since $f^{-}$ is a continuous function on the compact $\left[0,t\right]$, then there exists $u_{0}\in[0,t]$ such that
					\[
					l(t)=f(u_{0})^{-},
					\]
				 and 
					\[
					0\leq l(t)-l(s)=f(u_{0})^{-}-\underset{u\in[0,s]}{\sup}(f(u))^{-}.
					\]
				If $u_{0}\in[0,s]$ then $l(t)=l(s)$, there is nothing to prove. 	
				So we assume that $u_{0}\in(s,t]$. First, we set $v_{0}=u_{0}+s-t\in\left[2s-t,s\right]\subset [-T,s]$. Therefore we have two cases to discuss, depending on the sign of $2s-t$.					
				\noindent $\blacktriangleright$ If $v_{0}\in [-T,0]$ then $f(v_{0})=f(0)$ and
					\[
					\underset{u\in[0,s]}{\sup}(f(u))^{-}\geq(f(0))^{-}=(f(v_{0}))^{-}.
					\]
					Consequently we obtain
					\[
					0\leq l(t)-l(s)\leq f(t-s+v_{0})^{-}-(f(v_{0}))^{-}\leq\left|f(t-s+v_{0})-f(v_{0})\right|.
					\]
				\noindent $\blacktriangleright$ If $0\leq v_{0}\leq s$ then 
					\[
					\underset{u\in[0,s]}{\sup}(f(u))^{-}\geq(f(v_{0}))^{-},
					\]
				and
				\[
				0\leq l(t)-l(s)\leq f(t-s+v_{0})^{-}-(f(v_{0}))^{-}\leq\left|f(t-s+v_{0})-f(v_{0})\right|.
				\]
				\end{proof}
				
				\begin{proposition}\label{Prop 1}
					Suppose that both $b$ and $\sigma$ satisfy the conditions outlined in hypotheses $(Hb)$ and $(H\sigma)$, where $0<\alpha<\frac{1}{2}\wedge \theta \wedge \beta$. Under these conditions, the sequence $x^{n}$ remains uniformly bounded.
				\end{proposition}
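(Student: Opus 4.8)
The goal is to bound $\left\Vert x^{n}\right\Vert_{\infty,\alpha,T}$ by a constant independent of $n$, and since the norm \eqref{norm2} is non-decreasing in $t$, it suffices to produce a Gronwall-type integral inequality for the function $t\mapsto\left\Vert x^{n}\right\Vert_{\infty,\alpha,t}$ with $n$-free constants. The first step is to transfer the analysis from $x^{n}$ to the unreflected process $z^{n}$ of \eqref{app sch}. Writing $x^{n}(t)=z^{n}(t)+\sup_{0\leq s\leq t}\left(z^{n}(s)^{-}\right)$ on $[0,T]$, the regulator term is dominated pointwise by $\sup_{0\leq s\leq t}\left|z^{n}(s)\right|$, so that $\left\Vert x^{n}\right\Vert_{\infty,t}\leq\left\Vert\eta\right\Vert_{\mathcal{C}_{r}}+2\left\Vert z^{n}\right\Vert_{\infty,t}$. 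For the increments, I apply Lemma \ref{tech-est} to $f=z^{n}$: the increment of the regulator from time $u$ to time $t-s+u$ is controlled by $\sup_{-T\leq w\leq u}\left|z^{n}(t-s+w)-z^{n}(w)\right|$, i.e.\ by increments of $z^{n}$ alone. Thus both the supremum part and the modulus-of-continuity part of $\left\Vert x^{n}\right\Vert_{\infty,\alpha,t}$ are reduced to estimating $z^{n}$ and its increments; this is the decisive simplification, since the nonlinear, non-Lipschitz reflection term is entirely absorbed.

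Next I estimate increments of $z^{n}$. For $0\leq s<t\leq T$ one has $z^{n}(t)-z^{n}(s)=\int_{s}^{t}b\left(k_{n}(u),x_{k_{n}(u)}^{n}\right)\mathrm{d}u+\int_{s}^{t}\sigma\left(k_{n}(u),x_{k_{n}(u)}^{n}\right)\mathrm{d}g_{u}$. The drift is handled by the linear growth assumption in $(Hb)$, giving $\left|\int_{s}^{t}b\,\mathrm{d}u\right|\leq C\int_{s}^{t}\left(1+\left\Vert x^{n}\right\Vert_{\infty,\alpha,k_{n}(u)}\right)\mathrm{d}u$, while the fractional integral is controlled directly by Proposition \ref{est-frac}, yielding $\left|\int_{s}^{t}\sigma\,\mathrm{d}g_{u}\right|\leq C(1+\Lambda_{\alpha}(g))\int_{s}^{t}(u-s)^{-\alpha}\left(1+\left\Vert x^{n}\right\Vert_{\infty,\alpha,k_{n}(u)}\right)\mathrm{d}u$. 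Crucially, it is exactly the quantity $\left\Vert x^{n}\right\Vert_{\infty,\alpha,\cdot}$ that reappears on the right-hand side, which is what will permit the Gronwall closure; note also that Proposition \ref{est-frac} already carries a clean constant, the mesh-dependent factors $\delta_{n}^{\theta\wedge(1-\alpha)\wedge\beta}$ having been absorbed into powers of $T$, so no dependence on $n$ creeps in here.

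Inserting these two increment bounds into the definition \eqref{norm2} produces the supremum term together with the double integral $\int_{0}^{t}\left\Vert x^{n}(t-s+\cdot)-x^{n}(\cdot)\right\Vert_{\infty,s}(t-s)^{-\alpha-1}\,\mathrm{d}s$, into which the fractional estimate introduces a second singular factor $(\xi-w)^{-\alpha}$. After exchanging the order of integration by Fubini's theorem and evaluating the inner integral over the shift variable (which contributes a factor $(t-s)^{1-\alpha}$), the composed kernel $\kappa(t,u)$ is dominated, up to bounded factors, by a constant multiple of $(t-u)^{-2\alpha}$, which is integrable precisely because $\alpha<\frac{1}{2}$ — exactly the hypothesis of the statement. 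One thereby arrives at an inequality of the shape
\[
\left\Vert x^{n}\right\Vert_{\infty,\alpha,t}\leq C_{1}\bigl(1+\Lambda_{\alpha}(g)\bigr)+C_{2}\bigl(1+\Lambda_{\alpha}(g)\bigr)\int_{0}^{t}(t-u)^{-2\alpha}\left(1+\left\Vert x^{n}\right\Vert_{\infty,\alpha,k_{n}(u)}\right)\mathrm{d}u ,
\]
where $C_{1},C_{2}$ depend only on $T$, $\alpha$, the H\"older constant of $\eta$, and the growth constants of $b,\sigma$, and in particular not on $n$.

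Finally I close the estimate. Using $k_{n}(u)\leq u$ together with the monotonicity of $t\mapsto\left\Vert x^{n}\right\Vert_{\infty,\alpha,t}$, I replace $\left\Vert x^{n}\right\Vert_{\infty,\alpha,k_{n}(u)}$ by $\left\Vert x^{n}\right\Vert_{\infty,\alpha,u}$, obtaining a genuine singular Gronwall inequality with kernel $(t-u)^{-2\alpha}$; its resolution by a Mittag--Leffler / iterated-kernel argument (as in Nualart and Rascanu \cite{NR}) yields a finite bound on $\left\Vert x^{n}\right\Vert_{\infty,\alpha,T}$ that is uniform in $n$. I expect the main obstacle to be the third step: correctly carrying out the Fubini exchange in the double singular integral coming from the regulator increments and verifying that the composite kernel is integrable (equivalently, that $2\alpha<1$ suffices), all while keeping every constant free of $n$. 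The presence of the reflection makes this more delicate than in the unconstrained setting, but Lemma \ref{tech-est} is precisely the device that reduces it to the same singular-kernel computation as for the free process $z^{n}$.
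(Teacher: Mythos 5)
Your proposal follows essentially the same route as the paper: Lemma \ref{tech-est} to reduce the regulator increments to increments of $z^{n}$, the drift and fractional integrals controlled via linear growth and Proposition \ref{est-frac}, Fubini plus the beta-function computation to produce the $(t-u)^{-2\alpha}$ kernel, and closure by the singular Gronwall lemma of Nualart--Rascanu. The only cosmetic difference is that the paper works with the $2N$-th power of the norm and H\"older's inequality before invoking \cite[Lemma 7.6]{NR}, while you keep the first power and appeal to the Mittag--Leffler form of the same lemma; both are valid here.
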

				\begin{proof}
					For any integer $N \in \mathbb{N}^{*}$ and $t \in [0,T]$, we have
							\begin{align*}
								\parallel x^{n} \parallel_{\infty,\alpha,t}^{2N}&= \left(  \left\Vert x^{n}(s)\right\Vert_{\infty,t} +\int_{0}^{t}\left\Vert x^{n}(t-s+.)-x^{n}(.)\right\Vert_{\infty,s}h(t,s)\mathrm{d}s \right)^{2N} \\
								&\leq C_{N} \left\lbrace \sup_{-r \leq s \leq t} \left|x^{n}(s)\right|^{2N}+\left(\int_{0}^{t}\sup_{-r \leq u \leq s} \left|x^{n}(t-s+u)-x^{n}(u)\right|h(t,s)\mathrm{d}s\right)^{2N} \right\rbrace \\
								&=C_{N}(I_{1}+I_{2}).
							\end{align*}
					In this context, it's important to note that $C_{N}$ represents a positive constant that solely depends on $N$ and the other parameters of the problem. It might vary from one line to the next. Here, $h(t, s) = (t-s)^{-\alpha-1}$. Now, consider $s \in [0, t]$, and employing the convexity inequality for the second time, we can deduce
							\begin{align*}
								\left|x^{n}(s)\right|^{2N} &\leq C_{N}\left\lbrace \eta(0)^{2N}+\sup_{0 \leq u \leq t}\left|\int_{0}^{u}b\left(k_{n}(v),x_{k_{n}(v)}^{n}\right)\mathrm{d}v\right|^{2N} +\sup_{0 \leq u \leq t}\left|\int_{0}^{u}\sigma\left(k_{n}(v),x_{k_{n}(v)}^{n}\right)\mathrm{d}g_{v} \right|^{2N} \right\rbrace. \\
								&=C_{N}\left\lbrace \eta(0)^{2N}+I_{11}+I_{12} \right\rbrace. 
							\end{align*}	
					Regarding $I_{11}$, through the utilization of Holder's inequality and the linear growth assumption in $(Hb)$, we obtain the following  
							\begin{align*}
								I_{11} &\leq C_{N} \int_{0}^{t}\left|b\left(k_{n}(v),x_{k_{n}(v)}^{n}\right)\right|^{2N}\mathrm{d}v \\
								&\leq C_{N}\int_{0}^{t}\left(1+\left\Vert x_{k_{n}(v)}^{n} \right\Vert_{\mathcal{C}_r}^{2N}\right) \mathrm{d}v \\
								&\leq C_{N} \int_{0}^{t}\left(1+\left\Vert x^{n} \right\Vert_{\infty,k_{n}(v)}^{2N} \right) \mathrm{d}v.
							\end{align*}
					For $I_{12}$, employing the estimate from Proposition \ref{est-frac} and Holder's inequality, we can establish that 
							 $$
							 	\left|\int_{0}^{u}\sigma\left(k_{n}(v),x_{k_{n}(v)}^{n}\right)\mathrm{d}g_{v} \right|^{2N} \leq C_{N}\left(1+\Lambda_{\alpha}(g)\right)^{2N}  \int_{0}^{u}v^{-\alpha}\left(1+\left\Vert x^{n} \right\Vert_{\infty,\alpha,k_{n}(v)}^{2N}\right) \mathrm{d}v,
							 $$ 
					for any $u \in [0, t]$. Consequently,
							$$
								I_{12} \leq C_{N}\left(1+\Lambda_{\alpha}(g)\right)^{2N}\int_{0}^{t}v^{-\alpha}\left(1+\left\Vert x^{n} \right\Vert_{\infty,\alpha,k_{n}(v)}^{2N}\right) \mathrm{d}v.
							$$ 
					Therefore, 
							$$
								I_{1} \leq C_{N}\left\lbrace \left\Vert \eta \right\Vert_{\mathcal{C}_r}^{2N}+(1+\Lambda_{\alpha}(g)^{2N})\int_{0}^{t}v^{-\alpha}\left(1+\left\Vert x^{n} \right\Vert_{\infty,\alpha,k_{n}(v)}^{2N}\right) \mathrm{d}v\right\rbrace.
							$$ 
					Now, let's focus on the term $I_{2}$. Consider $s \in [0, t]$, we partition the aforementioned value into three segments based on the placement of u within the interval $[-r,s]$, and each of these segments will be examined individually. For $ u \in [-r,s-t]$, since $\eta$ satisfies  H\"older condition we have
							\begin{align}
								\left|x^{n}(t-s+u)-x^{n}(u)\right|&=\left|\eta(u+t-s)-\eta(u)\right| \nonumber \\
								&\leq \mathfrak{H}_{\theta,[-r,0]}\left|t-s\right|^{\theta}. \label{case 1}
							\end{align}
					For $ u \in (s-t,0]$, we have 
							\begin{align}
								\left|x^{n}(t-s+u)-x^{n}(u)\right| &\leq \left|x^{n}(t-s+u)-x^{n}(0)\right| +\left|x^{n}(0)-x^{n}(u)\right| \nonumber \\
								&\leq \left|x^{n}(t-s+u)-x^{n}(0)\right|+\mathfrak{H}_{\theta,[-r,0]}\left|t-s\right|^{\theta}. \label{case 2}
							\end{align}
					It is easy to see from ~\eqref{app sch} and simple algebra that
			 				\begin{align*}
			 					\left|x^{n}(t-s+u)-x^{n}(0)\right|&= \left| \int_{0}^{t-s+u}b\left(k_{n}(v),x_{k_{n}(v)}^{n}\right)\mathrm{d}v +\int_{0}^{t-s+u}\sigma\left(k_{n}(v),x_{k_{n}(v)}^{n}\right)\mathrm{d}g_{v} \right. \\
			 					&\left.+\sup_{0 \leq v \leq t-s+u}\left(z^{n}(v)^{-} \right)\right| \\
			 					&\leq 2 \left(\sup_{0 \leq v \leq t-s+u}\left|\int_{0}^{v}b\left(k_{n}(y),x_{k_{n}(y)}^{n}\right)\mathrm{d}y\right|+\sup_{0 \leq v \leq t-s+u}\left|\int_{0}^{v}\sigma\left(k_{n}(y),x_{k_{n}(y)}^{n}\right)\mathrm{d}g_{y}\right|\right).
			 				\end{align*}
			 		For $ v \in [0,t-s+u]$, by making use of the linear growth condition specified in $(Hb)$ and performing a change of the integration variable, we derive 
			 				\begin{align*}
			 					\left|\int_{0}^{v}b\left(k_{n}(y),x_{k_{n}(y)}^{n}\right)\mathrm{d}y\right| &\leq C\int_{0}^{t-s}\left(1+\left\Vert x_{k_{n}(y)}^{n}\right\Vert_{\mathcal{C}_r}\right)\mathrm{d}y \\
			 					&\leq C\int_{s}^{t}\left(1+\left\Vert x^{n} \right\Vert_{\infty,k_{n}(z)}\right)\mathrm{d}z.
			 				\end{align*}
			 		For the integral with respect to $g$ applying the estimate from Proposition \ref{est-frac} and considering that $\left\Vert x^{n} \right\Vert_{\infty,\alpha,k_{n}(y)}$ is monotonically increasing with $y$,  along with a similar change in the integration variable, we attain
			 				\begin{align*}
			 					\left|\int_{0}^{v}\sigma\left(k_{n}(y),x_{k_{n}(y)}^{n}\right)\mathrm{d}g_{y}\right|&\leq C(1+\Lambda_{\alpha}(g))\int_{0}^{v}y^{-\alpha}\left(1+\left\Vert x^{n} \right\Vert_{\infty,\alpha,k_{n}(y)}\right) \mathrm{d}y	\\
			 					&\leq C(1+\Lambda_{\alpha}(g))\int_{0}^{t-s}y^{-\alpha}\left(1+\left\Vert x^{n} \right\Vert_{\infty,\alpha,k_{n}(y)}\right) \mathrm{d}y \\
			 					&\leq C(1+\Lambda_{\alpha}(g))\int_{s}^{t}(z-s)^{-\alpha}\left(1+\left\Vert x^{n} \right\Vert_{\infty,\alpha,k_{n}(z)}\right) \mathrm{d}z
			 				\end{align*}
			 		Proceeding from this point, we arrive at the following inequality  
			 				\begin{align}
			 					\left|x^{n}(t-s+u)-x^{n}(u)\right| &\leq C\left(\int_{s}^{t}\left(1+\parallel x^{n} \parallel_{\infty,k_{n}(z)}\right)\mathrm{d}z\right. \nonumber \\
			 					&\left.+(1+\Lambda_{\alpha}(g))\int_{s}^{t}(z-s)^{-\alpha}\left(1+\left\Vert x^{n} \right\Vert_{\infty,\alpha,k_{n}(z)}\right) \mathrm{d}z +\mathfrak{H}_{\theta,[-r,0]}\left|t-s\right|^{\theta} \right) \nonumber\\
			 					&\leq C\left((1+\Lambda_{\alpha}(g))\int_{s}^{t}(z-s)^{-\alpha}\left(1+\left\Vert x^{n} \right\Vert_{\infty,\alpha,k_{n}(z)}\right) \mathrm{d}z +\mathfrak{H}_{\theta,[-r,0]}\left|t-s\right|^{\theta} \right) \label{case 2-1}
			 				\end{align}
			 		This holds true for any $u \in (s-t,0]$. Finaly for $ u \in (0,s]$, using 
			 		lemma \eqref{tech-est} we obtain 
			 		\begin{align*}
			 			\left|x^{n}(u+t-s)-x^{n}(u)\right|&=\left|z^{n}(u+t-s)-z^{n}(u)+\sup_{0 \leq v \leq t-s+u}z^{n}(v)^{-}-\sup_{0 \leq v \leq u}z^{n}(v)^{-}\right| \\
			 			&\leq \left|z^{n}(u+t-s)-z^{n}(u)\right|+\sup_{-T \leq v \leq u}\left|z^{n}(v+t-s)-z^{n}(v)\right| \\
			 			&\leq 2 \sup_{-T \leq v \leq u}\left|z^{n}(v+t-s)-z^{n}(v)\right| \\
			 			&\leq 2\left(\sup_{s-t \leq v \leq s}\int_{v\vee 0}^{t-s+v}\left|b\left(k_{n}(x),x_{k_{n}(x)}^{n}\right)\right|\mathrm{d}x+\sup_{s-t \leq v \leq s}\left|\int_{v\vee 0}^{t-s+v}\sigma\left(k_{n}(x),x_{k_{n}(x)}^{n}\right)\mathrm{d}g_{x}\right|\right).
			 		\end{align*}
			  	By applying the linear growth assumption within the context of $(Hb)$, employing a change of variables, and taking into account that the norm $\left\Vert x^{n} \right\Vert_{\infty,k_{n}(x)}$ is monotonically increasing with respect to the variable x, we deduce the following result for the case where $0<v \leq u \leq s$ 
			  			$$
			  				\int_{v\vee 0}^{t-s+v}\left|b\left(k_{n}(x),x_{k_{n}(x)}^{n}\right)\right|\mathrm{d}x \leq C\int_{s}^{t}\left(1+\left\Vert x^{n} \right\Vert_{\infty,k_{n}(y)}\right)\mathrm{d}y.
			  			$$
			  	 Utilizing the fractional integral with respect to $g$ along with the estimation provided in proposition \ref{est-frac}, and considering the property that $\left\Vert x^{n} \right\Vert_{\infty,\alpha,k_{n}(x)}$ is non-decreasing in x, we derive the following conclusion for the scenario where $0<v \leq u \leq s$ 
			  	 		$$
			  	 			\left|\int_{v\vee 0}^{t-s+v}\sigma\left(k_{n}(x),x_{k_{n}(x)}^{n}\right)\mathrm{d}g_{x}\right| \leq C\left(1+\Lambda_{\alpha}(g)\right)\int_{s}^{t}(y-s)^{-\alpha}\left(1+\left\Vert x^{n} \right\Vert_{\infty,\alpha,k_{n}(y)}\right)\mathrm{d}y.
			  	 		$$
			  	 As a result, we establish the following outcome for any value of $u$ within the interval $[0, s]$
			  				\begin{align}
			  					\left|x^{n}(t-s+u)-x^{n}(u)\right| &\leq C \left\lbrace \int_{s}^{t}\left(1+\left\Vert x^{n} \right\Vert_{\infty,k_{n}(y)}\right)\mathrm{d}y \right. \nonumber\\
			  					&+\left.\left(1+\Lambda_{\alpha}(g)\right)\int_{s}^{t}(y-s)^{-\alpha}\left(1+\left\Vert x^{n} \right\Vert_{\infty,\alpha,k_{n}(y)}\right)\mathrm{d}y \right\rbrace \nonumber\\
			  					&\leq C \left(1+\Lambda_{\alpha}(g)\right)\int_{s}^{t}(y-s)^{-\alpha}\left(1+\left\Vert x^{n} \right\Vert_{\infty,\alpha,k_{n}(y)}\right)\mathrm{d}y. \label{case 3}
			  				\end{align}
			  	Now that we have all the requisite estimates in place, we can proceed to obtain
			  				\begin{align*}
			  					\left|x^{n}(t-s+u)-x^{n}(u)\right| &\leq C\left\lbrace \mathfrak{H}_{\theta,[-r,0]}(t-s)^{\theta}+\left(1+\Lambda_{\alpha}(g)\right)\int_{s}^{t}(y-s)^{-\alpha}\left(1+\left\Vert x^{n} \right\Vert_{\infty,\alpha,k_{n}(y)}\right)\mathrm{d}y\right\rbrace
			  				\end{align*}
			  	For  any $u \in [-r,s]$. As a result  
			  				\begin{align*}
			  					I_{2} &\leq C_{N} \left\lbrace \left(\int_{0}^{t}(t-s)^{\theta-\alpha-1}\mathrm{d}s\right)^{2N} \right. \\
			  					&\left.+\left(1+\Lambda_{\alpha}(g)\right)^{2N}\left(\int_{0}^{t}\int_{s}^{t}(y-s)^{-\alpha}\left(1+\left\Vert x^{n} \right\Vert_{\infty,\alpha,k_{n}(y)}\right)\mathrm{d}y(t-s)^{-\alpha-1}\mathrm{d}s\right)^{2N} \right\rbrace \\
			  					&\leq C_{N}\left\lbrace 1+\left(1+\Lambda_{\alpha}(g)\right)^{2N}\left(\int_{0}^{t}\left(1+\left\Vert x^{n} \right\Vert_{\infty,\alpha,k_{n}(y)}\right)\int_{0}^{y}(y-s)^{-\alpha}(t-s)^{-\alpha-1}\mathrm{d}s\mathrm{d}y\right)^{2N} \right\rbrace.
			  				\end{align*}
			  	By applying Fubini's theorem with a change of variable $s = z - (t - z)y$, we are able to draw the following conclusion 	
			  				$$
			  				  	\int_{0}^{t}\left(1+\left\Vert x^{n} \right\Vert_{\infty,\alpha,k_{n}(y)}\right)\int_{0}^{y}(y-s)^{-\alpha}(t-s)^{-\alpha-1}\mathrm{d}s\mathrm{d}y \leq B(2\alpha,1-\alpha) \int_{0}^{t}\left(1+\left\Vert x^{n} \right\Vert_{\infty,\alpha,k_{n}(y)}\right)(t-y)^{-2\alpha}\mathrm{d}y
			  				$$
			  	Where $B(.,.)$ is the Euler beta function. Hence, through the application of H"older's inequality, it yields 
			  				$$
			  					I_{2} \leq C_{N}\left\lbrace 1+\left(1+\Lambda_{\alpha}(g)\right)^{2N}\int_{0}^{t}\left(1+\left\Vert x^{n} \right\Vert_{\infty,\alpha,k_{n}(y)}^{2N}\right)(t-y)^{-2\alpha}\mathrm{d}y \right\rbrace.
			  				$$ 
			  	By consolidating all the estimates together, we achieve 
			  				\begin{align*}
			  					\left\Vert x^{n} \right\Vert_{\infty,\alpha,t}^{2N} &\leq C_{N}\left\lbrace 1+\left(1+\Lambda_{\alpha}(g)^{2N}\right)\int_{0}^{t}\left((t-y)^{-2\alpha}+y^{-\alpha}\right)\left(1+\left\Vert x^{n} \right\Vert_{\infty,\alpha,k_{n}(y)}^{2N}\right)\mathrm{d}y \right\rbrace. \\
			  					&\leq C_{N}\left\lbrace 1+\left(1+\Lambda_{\alpha}(g)^{2N}\right)\int_{0}^{t}\left((t-y)^{-2\alpha}+y^{-\alpha}\right)\left(1+\left\Vert x^{n} \right\Vert_{\infty,\alpha,y}^{2N}\right)\mathrm{d}y \right\rbrace.
			  				\end{align*}
			  	Consequently, through the application of a Gronwall-type lemma \cite[Lemma 7.6]{NR}, we can infer that
			  				\begin{equation}
			  				\sup_{n \in \mathbb{N}^{*}} \sup_{ t \in [0,T]} \left\Vert x^{n} \right\Vert_{\infty,\alpha,t} < + \infty. \label{unif bor}
			  				\end{equation} 
			  	This implies that the sequence $x^{n}$ is uniformly bounded within the space $C\left([-r,T],\mathbb{R}\right)$.
			  	\end{proof}
			  			\begin{proposition}\label{prop 3.3}
			  				For every positive integer $N$ and $n$, there exists a constant $C_{N}$ such that for all $s$ and $t$ in the interval $[-r, T]$, the following inequality holds 
			  					\begin{equation}\label{equi est}
			  						\left|x^{n}(t)-x^{n}(s)\right| \leq C_{N}\left|t-s\right|^{\theta \wedge \frac{1}{2}},
			  					\end{equation}
			  			\end{proposition}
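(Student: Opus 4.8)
The plan is to prove \eqref{equi est} first for $0\le s<t\le T$ and then to recover the general case $s,t\in[-r,T]$ by splitting at the origin. The decisive input is the uniform bound \eqref{unif bor} from Proposition \ref{Prop 1}: setting $M:=\sup_{n}\sup_{t}\left\Vert x^{n}\right\Vert_{\infty,\alpha,t}<+\infty$, we may replace every occurrence of $\left\Vert x^{n}\right\Vert_{\infty,k_{n}(\cdot)}$ and $\left\Vert x^{n}\right\Vert_{\infty,\alpha,k_{n}(\cdot)}$ appearing in Proposition \ref{est-frac} and its companion estimates by the finite constant $M$. This collapses all the integrals below into explicit powers of $t-s$, which is exactly what converts the a priori bounds into a H\"older modulus of continuity with a constant independent of $n$.

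For $0\le s<t\le T$ I would write, from \eqref{app sch}, $x^{n}(t)-x^{n}(s)$ as the sum of the drift increment $\int_{s}^{t}b(k_{n}(u),x^{n}_{k_{n}(u)})\,\mathrm{d}u$, the fractional increment $\int_{s}^{t}\sigma(k_{n}(u),x^{n}_{k_{n}(u)})\,\mathrm{d}g_{u}$, and the reflection increment $\sup_{0\le u\le t}z^{n}(u)^{-}-\sup_{0\le u\le s}z^{n}(u)^{-}$, and bound the three separately. For the drift term, the linear growth in $(Hb)$ together with \eqref{unif bor} gives $C(1+M)(t-s)$. For the fractional term, Proposition \ref{est-frac} and \eqref{unif bor} yield $C(1+\Lambda_{\alpha}(g))(1+M)\int_{s}^{t}(u-s)^{-\alpha}\,\mathrm{d}u=C(t-s)^{1-\alpha}$.

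The only genuinely delicate term is the reflection increment, and this is precisely what Lemma \ref{tech-est} is designed for. Applying it with $f=z^{n}$ (extended to $[-T,0]$ by the constant $z^{n}(0)=\eta(0)$, exactly as in the statement) bounds $\big|\sup_{0\le u\le t}z^{n}(u)^{-}-\sup_{0\le u\le s}z^{n}(u)^{-}\big|$ by $\sup_{-T\le v\le s}|z^{n}(t-s+v)-z^{n}(v)|$. I would then estimate these increments of the unconstrained process $z^{n}$ exactly as in the case $u\in(0,s]$ of the proof of Proposition \ref{Prop 1}: writing $z^{n}(t-s+v)-z^{n}(v)=\int_{v\vee 0}^{t-s+v}b\,\mathrm{d}x+\int_{v\vee 0}^{t-s+v}\sigma\,\mathrm{d}g_{x}$ (the difference vanishing when $t-s+v<0$), the drift part is $\le C(1+M)(t-s)$ by linear growth and the fractional part is $\le C(1+\Lambda_{\alpha}(g))(1+M)(t-s)^{1-\alpha}$ by Proposition \ref{est-frac}, both uniformly in $v$. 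Hence the reflection increment is again controlled by $C\big((t-s)+(t-s)^{1-\alpha}\big)$.

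Collecting the three contributions, $|x^{n}(t)-x^{n}(s)|$ is bounded by a constant independent of $n$ times $(t-s)^{\theta}+(t-s)+(t-s)^{1-\alpha}$. Since $[0,T]$ is bounded and each exponent satisfies $\theta\ge\theta\wedge\tfrac12$, $\,1\ge\theta\wedge\tfrac12$, and $1-\alpha>\tfrac12\ge\theta\wedge\tfrac12$ (using $\alpha<\tfrac12$), every term is dominated by $C(t-s)^{\theta\wedge\frac12}$, which gives \eqref{equi est} on $[0,T]$. For the general case I would split at $0$: if $s\in[-r,0]$ and $t\in[0,T]$, then $|x^{n}(t)-x^{n}(s)|\le|x^{n}(t)-x^{n}(0)|+|\eta(0)-\eta(s)|$, the first term being the previous case with left endpoint $0$ (using $t\le t-s$) and the second bounded by $\mathfrak{H}_{\theta,[-r,0]}(\eta)|s|^{\theta}\le C|t-s|^{\theta\wedge\frac12}$ since $|s|\le|t-s|$; while if both points lie in $[-r,0]$ the estimate is immediate from the $\theta$-H\"older continuity of $\eta$. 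The main obstacle throughout is the reflection term: without Lemma \ref{tech-est} one cannot turn the difference of two suprema of $z^{n}(\cdot)^{-}$ into a plain increment of $z^{n}$, and it is only after this reduction, combined with the uniform bound \eqref{unif bor}, that the lack of Lipschitz continuity of the Skorokhod map in H\"older norm is circumvented.
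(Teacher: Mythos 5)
Your proposal is correct and follows essentially the same route as the paper: the paper's Step 2 simply sets $u=s$ in the estimate \eqref{case 3} from the proof of Proposition \ref{Prop 1}, which is precisely the drift/fractional/reflection decomposition you re-derive using Lemma \ref{tech-est}, Proposition \ref{est-frac} and the uniform bound \eqref{unif bor}, and the remaining cases are handled by the same splitting at $0$ and the H\"older continuity of $\eta$.
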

			  		\begin{proof}
			  		Let $ N,n \in \mathbb{N}^{*}$. The proof of this proposition can be broken down into three distinct cases that need to be examined.\\
			  		\textbf{Step 1:} If $-r \leq s < t \leq 0 $, we have 
			  				$$
			  				\left|x^{n}(t)-x^{n}(s)\right|^{2N} \leq \mathfrak{H}_{\theta,[-r,0]}^{2N}\left|t-s\right|^{2N\theta}.
			  				$$ 
			  		\textbf{Step 2:} In the second case, where $0 < s < t \leq T$, we can start by setting $u = s$ in equation \eqref{case 3}. Using H\"older's inequality and some fundamental estimates, we arrive at the following result
			  				\begin{align*}
			  					\left|x^{n}(t)-x^{n}(s)\right| &\leq C \left(1+\Lambda_{\alpha}(g)\right)\int_{s}^{t}(y-s)^{-\alpha}\left(1+\left\Vert x^{n} \right\Vert_{\infty,\alpha,y}\right)\mathrm{d}y \\
			  					 &\leq  C \left(1+\Lambda_{\alpha}(g)\right)(t-s)^{(1-\alpha)(1-\frac{1}{2N})}\left(\int_{s}^{t}(y-s)^{-\alpha}\left(1+\left\Vert x^{n} \right\Vert_{\infty,\alpha,y}^{2N}\right)\mathrm{d}y\right)^{\frac{1}{2N}}.
			  				\end{align*}
			  		With the help of \eqref{unif bor}, we infer that 
			  				\begin{align*}
			  					\left|x^{n}(t)-x^{n}(s)\right|^{2N} &\leq C_{N} \left(1+\Lambda_{\alpha}(g)\right)^{2N}\left|t-s\right|^{2N(1-\alpha)} \\
			  					&\leq C_{N} \left|t-s\right|^{N}.
			  				\end{align*}
					\textbf{Step 3:} If $ s <0<t$, following a similar procedure as employed for the estimations \eqref{case 2}-\eqref{case 2-1} and utilizing equations \eqref{unif bor}, we arrive at the following conclusions
			  					\begin{align*}
			  						\left|x^{n}(t)-x^{n}(s)\right|^{2N} &\leq  C_{2N}\left((1+\Lambda_{\alpha}(g))^{2N}t^{(1-\alpha)(2N-1)}\int_{0}^{t}z^{-\alpha}\left(1+\left\Vert x^{n} \right\Vert_{\infty,\alpha,z}^{2N}\right) \mathrm{d}z +\mathfrak{H}_{\theta,[-r,0]}^{2N}\left|t-s\right|^{2N\theta } \right) \\
			  						&\leq C_{N}\left|t-s\right|^{2N\theta \wedge N}.
			  					\end{align*}
			  		\end{proof}
			  	In conclusion, the analysis for both cases, i.e., when $s < 0 < t, 0 < s < t \leq T$  and $s < t \leq 0$ has allowed us to derive the desired results, demonstrating the validity of the proposition.
			  	\section{Existence and uniqueness of solutions}
			  	 \subsection{Existence of a solution}
			  	\begin{theorem}\label{Th exi}
			  		Under the conditions that the coefficients $b$ and $\sigma$ adhere to the assumptions $(Hb)$ and $(H\sigma)$, if $0 < \alpha < \frac{1}{2} \wedge \theta \wedge \beta$, then the equation \eqref{equ} possesses a solution denoted as $x \in C^{\theta\wedge\frac{1}{2}}\left([-r, T]\right)$.
			  	\end{theorem}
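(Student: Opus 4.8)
The plan is to pass to the limit along a subsequence of the Euler scheme $x^{n}$ defined in \eqref{app sch}. Proposition \ref{Prop 1} gives uniform boundedness of $\{x^{n}\}$ in $C([-r,T],\mathbb{R})$, while the equicontinuity estimate \eqref{equi est} of Proposition \ref{prop 3.3} controls the modulus of continuity uniformly in $n$ by $C_{N}|t-s|^{\theta\wedge\frac12}$. By the Arzel\`a--Ascoli theorem I can therefore extract a subsequence, still written $x^{n}$, converging uniformly on $[-r,T]$ to a continuous function $x$. Letting $n\to\infty$ in \eqref{equi est} shows that $x$ inherits the exponent $\theta\wedge\frac12$, so $x\in C^{\theta\wedge\frac12}([-r,T])$, and since $x^{n}=\eta$ on $[-r,0]$ we get $x=\eta$ there.

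It remains to identify each term of \eqref{app sch} with the corresponding term for $x$. Because $k_{n}(s)\to s$ and, by the increment estimate \eqref{est incre} together with the uniform convergence $x^{n}\to x$, the segments satisfy $x^{n}_{k_{n}(s)}\to x_{s}$ uniformly, the continuity and linear growth of $b$ combined with dominated convergence yield $\int_{0}^{t}b(k_{n}(s),x^{n}_{k_{n}(s)})\,ds\to\int_{0}^{t}b(s,x_{s})\,ds$. For the regulator I would use that the Skorokhod map $f\mapsto\sup_{0\le s\le\cdot}(f(s))^{-}$ is $1$-Lipschitz for the supremum norm, since $|a^{-}-b^{-}|\le|a-b|$; writing $z$ for the limit of $z^{n}$, the uniform convergence $z^{n}\to z$ then forces $\sup_{0\le s\le t}(z^{n}(s)^{-})\to\sup_{0\le s\le t}(z(s)^{-})=:Y(t)$. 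This is exactly the place where the supremum norm succeeds whereas the $\lambda$-H\"older norm fails, as noted in the introduction.

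The main obstacle is the fractional integral term. Setting $f_{n}(s)=\sigma(k_{n}(s),x^{n}_{k_{n}(s)})$ and $f(s)=\sigma(s,x_{s})$, I apply the pathwise bound \eqref{Int-est} to the difference, so that $\bigl|\int_{0}^{t}(f_{n}-f)\,dg_{s}\bigr|\le\Lambda_{\alpha}(g)\,\|f_{n}-f\|_{\alpha,1,[0,t]}$, and I must show $\|f_{n}-f\|_{\alpha,1,[0,t]}\to0$. The zeroth-order part $\int_{0}^{t}|f_{n}(s)-f(s)|\,s^{-\alpha}\,ds$ is handled by the H\"older-in-time and Lipschitz-in-space bounds of $(H\sigma)$, the uniform convergence, and dominated convergence. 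The delicate piece is the double integral against the singular kernel $(s-y)^{-1-\alpha}$: here I would bound $|(f_{n}-f)(s)-(f_{n}-f)(y)|$ using both the increment estimate \eqref{est incr 2} for $f_{n}$ and the H\"older continuity of $f$, so that, thanks to the uniform bound \eqref{unif bor} and the condition $\alpha<\beta$ which makes the resulting kernel integrable, the singular integral is dominated uniformly in $n$ and tends to $0$ as $\delta_{n}\to0$. This gives $\int_{0}^{t}\sigma(k_{n}(s),x^{n}_{k_{n}(s)})\,dg_{s}\to\int_{0}^{t}\sigma(s,x_{s})\,dg_{s}$.

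Finally, to confirm that $x$ solves the reflection problem I would verify the Skorokhod conditions for the limit. Writing $z(t)=\eta(0)+\int_{0}^{t}b(s,x_{s})\,ds+\int_{0}^{t}\sigma(s,x_{s})\,dg_{s}$, the three preceding limits give $x(t)=z(t)+Y(t)$ with $Y(t)=\sup_{0\le s\le t}(z(s)^{-})$; the explicit formula \eqref{SPsol} then shows $x\ge0$, $Y$ non-decreasing, $Y(0)=0$, and the support property $Y(t)=\int_{0}^{t}\mathbb{I}_{x(s)=0}\,dY(s)$, which is the standard characterisation of the one-dimensional Skorokhod map. Together with $x=\eta$ on $[-r,0]$ this establishes that $x$ solves \eqref{equ} with the claimed regularity.
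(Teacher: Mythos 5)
Your overall strategy is the paper's: uniform boundedness (Proposition \ref{Prop 1}) plus the equicontinuity estimate \eqref{equi est}, Arzel\`a--Ascoli, inheritance of the H\"older exponent from the $n$-independent constant, and then term-by-term passage to the limit in \eqref{app sch}, with the fractional integral handled through the pathwise bound \eqref{Int-est} applied to $f_n-f$. The drift term, the regulator term (sup-norm Lipschitz continuity of the Skorokhod map) and the identification of the limit as a solution are all fine.

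The gap is in the one step that is actually delicate: showing $\|f_n-f\|_{\alpha,1,[0,t]}\to 0$, specifically the double integral $\int_0^t\int_0^s |(f_n-f)(s)-(f_n-f)(y)|(s-y)^{-1-\alpha}\,dy\,ds$. You assert that bounding $|(f_n-f)(s)-(f_n-f)(y)|$ by the increments of $f_n$ and of $f$ makes this quantity ``dominated uniformly in $n$'' and that it ``tends to $0$ as $\delta_n\to 0$.'' The first claim gives only uniform boundedness, not convergence to zero: the triangle-inequality bound $|f_n(s)-f_n(y)|+|f(s)-f(y)|$ contains no small factor, and estimate \eqref{est incr 2} only controls the jump of $f_n$ across a single mesh cell, not increments over arbitrary $s,y$. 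Uniform convergence $f_n\to f$ does not by itself control an integral against the singular kernel $(s-y)^{-1-\alpha}$, so the convergence to zero is unproved as written. The paper closes exactly this gap with an interpolation device: it factors out $\sup_t|\sigma(t,x_t)-\sigma(k_n(t),x^n_{k_n(t)})|^{\epsilon}$ (which tends to $0$ by \eqref{est incre} and uniform convergence) and shows that the remaining three quantities $I_1,I_2,I_3$ --- the $(1-\epsilon)$-powers of the sup difference and of the increment integrals of $f$ and of $f_n$ separately --- are finite uniformly in $n$, using the H\"older continuity of $x$ and a mesh-by-mesh decomposition for the piecewise-constant $f_n$. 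You would need this $\epsilon$-trick, or a carefully justified dominated-convergence argument that also handles the contribution of $f_n$ near the mesh points $t_k^n$ where $f_n$ jumps while $(s-y)^{-1-\alpha}$ is singular, to complete the proof.
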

			  	\begin{proof}
			  	The sequence of functions $x^{n}$ exhibits equicontinuity and is bounded in $C^{\theta \wedge \frac{1}{2}}\left([-r, T], \mathbb{R}\right)$. Consequently, by virtue of the Ascoli theorem, there exists a subsequence $(x^{\phi(n)})_{n \geq 0}$ that uniformly converges to a function $x \in C\left([-r, T], \mathbb{R}\right)$. By leveraging the uniform convergence of $(x^{\phi(n)})_{n \geq 0}$ to $x$, along with the observation that in \eqref{equi est}, the constant $C_N$ remains independent of $n$, it becomes evident that $x$ belongs to $C^{\theta \wedge \frac{1}{2}}$. Furthermore the integral $\int_{0}^{.}\sigma\left(s,x_{s}\right)\mathrm{d}g_{s}$ is well-defined. For the sake of simplicity, we assume that $x^{n}$ converge uniformly to $x$, and let's demonstrate the following convergences
			  				\begin{align}
			  					\underset{n \rightarrow +\infty}{\lim} \int_{0}^{t}b\left(k_{n}(s),x_{k_{n}(s)}^{n}\right)\mathrm{d}s &= \int_{0}^{t}b\left(s,x_{s}\right) \mathrm{d}s, \label{lim 1} \\
			  					\underset{n \rightarrow +\infty}{\lim} \int_{0}^{t}\sigma\left(k_{n}(s),x_{k_{n}(s)}^{n}\right)\mathrm{d}g_{s} &= \int_{0}^{t}\sigma\left(s,x_{s}\right) \mathrm{d}g_{s}. \label{lim 2} 
			  				\end{align}	
			  		These limits hold as $n$ tends to infinity, with uniform convergence for $t$ in the interval $[0, T]$. For the first convergence, by leveraging the assumptions outlined in $(Hb)$ and considering the uniform convergence of $x^{n}$ to $x$, we establish equation \eqref{lim 1}. Regarding the second convergence, we can employ the estimate \eqref{Int-est}, to obtain
			  				\begin{align*}
			  					\underset{0 \leq t \leq T}{\sup}\left|\int_{0}^{t}\sigma\left(s,x_s\right)-\sigma\left(k_n(s),x_{k_{n}(s)}^{n}\right)\mathrm{d}g_{s}\right|&\leq C\Lambda_\alpha(g)\sup_{0 \leq t \leq T}\left|\sigma\left(t,x_t\right)-\sigma\left(k_n(t),x_{k_{n}(t)}^{n}\right)\right|^{\epsilon}\\
			  					&\times (I_{1}+I_{2}+I_{3}).
			  				\end{align*}	
			  		Where 	
			  				\begin{align*}
			  					I_{1}&=\sup_{0 \leq t \leq T}\left|\sigma\left(t,x_t\right)-\sigma\left(k_n(t),x_{k_{n}(t)}^{n}\right)\right|^{1-\epsilon}, \\
			  					I_{2}&=\int_{0}^{T}\int_{0}^{s}\dfrac{\left|\sigma\left(s,x_s\right)-\sigma\left(z,x_z\right)\right|^{1-\epsilon}}{(s-z)^{\alpha+1}}\mathrm{d}z\mathrm{d}s, \\
			  					I_{3}&=\int_{0}^{T}\int_{0}^{s}\dfrac{\left|\sigma\left(k_n(s),x_{k_n(s)}^n\right)-\sigma\left(k_n(z),x_{k_n(z)}^n\right)\right|^{1-\epsilon}}{(s-z)^{\alpha+1}}\mathrm{d}z\mathrm{d}s,
			  				\end{align*}		
			  	where $\epsilon$ is a small positive number, ensuring that $I_{2}$ and $I_{3}$ are finite. Next, we need to verify the convergence of $I_{1}$ by utilizing the estimate presented in \eqref{est incre}, coupled with the equation ~\eqref{unif bor} and the assumption laid out in $(H\sigma)$ we obtain 
			  				\begin{align*}
			  					I_{1} &\leq C\left[\underset{0 \leq t \leq T}{\sup}\left(t-k_{n}(t)\right)^{\beta}\left(1+\left\Vert x_{t}  \right\Vert_{\mathcal{C}_{r}}\right)+\underset{0 \leq t \leq T}{\sup}\left\Vert x_{t}-x_{k_{n}(t)}^{n} \right\Vert_{\mathcal{C}_{r}}\right]^{1-\epsilon} \\
			  					&\leq C \left[\delta_{n}^{\beta}+\underset{0 \leq t \leq T}{\sup}\left\Vert x_{t}-x_{t}^{n} \right\Vert_{\mathcal{C}_{r}}+\underset{0 \leq t \leq T}{\sup}\left\Vert x_{t}^{n}-x_{k_{n}(t)}^{n}\right\Vert_{\mathcal{C}_{r}} \right]^{1-\epsilon} \\
			  					&\leq C\left[\delta_{n}^{\beta}+\underset{0 \leq t \leq T}{\sup}\left|x(t)-x^{n}(t)\right|+\left(1+\Lambda_{\alpha}(g)\right)\delta_{n}^{\theta \wedge (1-\alpha)}\left(1+\left\Vert x^{n} \right\Vert_{\infty,T}\right)\right]^{1-\epsilon} \longrightarrow 0 .
			  				\end{align*}
			   Now, turning our attention to $I_2$, we have 
			  				\begin{align*}
			  					I_{2}&=\int_{0}^{T}\int_{0}^{s}\dfrac{\left|\sigma\left(s,x_s\right)-\sigma\left(z,x_z\right)\right|^{1-\epsilon}}{(s-z)^{\alpha+1}}\mathrm{d}z\mathrm{d}s \\
			  					&\leq C\left[\int_{0}^{T}\int_{0}^{s}\dfrac{(s-z)^{(1-\epsilon)\beta}}{(s-z)^{\alpha+1}}\mathrm{d}z\mathrm{d}s+\int_{0}^{T}\int_{0}^{s}\dfrac{\left\Vert x_{s}-x_{z}\right\Vert_{\mathcal{C}_r}^{1-\epsilon}}{(s-z)^{\alpha+1}}\mathrm{d}z\mathrm{d}s\right] \\
			  					&\leq C_{1}+C_{2}\left(1+\Lambda_{\alpha}(g)\right)^{1-\epsilon}\int_{0}^{T}\int_{0}^{s}\dfrac{(s-z)^{(1-\epsilon)(\theta \wedge \frac{1}{2})}}{(s-z)^{\alpha+1}}\mathrm{d}z\mathrm{d}s <+\infty.
			  				\end{align*}
			  	That has been tied to the fact that $x \in C^{\theta \wedge \frac{1}{2}}$. Finally, considering $I_3$, we find 
			  				\begin{align*}
			  					I_{3}&=\int_{0}^{T}\int_{0}^{s}\dfrac{\left|\sigma\left(k_n(s),x_{k_n(s)}^n\right)-\sigma\left(k_n(z),x_{k_n(z)}^n\right)\right|^{1-\epsilon}}{(s-z)^{\alpha+1}}\mathrm{d}z\mathrm{d}s \\
			  					&\leq C\int_{0}^{T}\int_{0}^{s}\dfrac{\left|k_n(s)-k_n(z)\right|^{(1-\epsilon)\beta}+\left\Vert x_{k_{n}(s)}^{n}-x_{k_{n}(z)}^{n}\right\Vert_{\mathcal{C}_r}^{1-\epsilon}}{(s-z)^{\alpha+1}}\mathrm{d}z\mathrm{d}s.
			  				\end{align*}
			  	On the other hand we have 
			  				$$
			  					\left\Vert x_{k_{n}(s)}^{n}-x_{k_{n}(z)}^{n}\right\Vert_{\mathcal{C}_r} \leq C \left(1+\Lambda_{\alpha}(g)\right)\left|k_{n}(s)-k_{n}(z)\right|^{\theta\wedge \frac{1}{2}}.
			  				$$ 
			  	Hence
			  				 $$
			  				 	I_{3} \leq C\int_{0}^{T}\int_{0}^{s}\dfrac{\left|k_n(s)-k_n(z)\right|^{(1-\epsilon)\beta}+\left|k_{n}(s)-k_{n}(z)\right|^{(1-\epsilon)(\theta \wedge \frac{1}{2})}}{(s-z)^{\alpha+1}}\mathrm{d}z\mathrm{d}s <+\infty.
			  				 $$
			  	Where we have using the partition on the interval and decomposing the integrals in finite sums.			
			  	\end{proof}
			  	 \subsection{Uniqueness of the solution}
			  	  	To establish the uniqueness of the results, it is essential for us to have the capability to manage the disparity between two solutions to our equation $\left\Vert x^1 -x^2\right\Vert_{\infty,\alpha,T}$. However, as we have observed in \cite[Remark 3.6]{FR}, it is not feasible to regulate the disparity between two regulator terms $\underset{0 \leq s \leq .}{\sup}(z^{1}(s))$ and $\underset{0 \leq s \leq .}{\sup}(z^{2}(s))$ using a $\lambda$-H\"older norm. That's why we are constrained to the class of equations in which $\sigma$ is expressed in the following form $\sigma(s,x_s)=\sigma(s,x(s-r))$.
			  	  	\begin{theorem}
			  	  			Assume that, the coefficients $b$ and $\sigma$ satisfy to the assumptions $(Hb)$ and $(H\sigma)$, if $0 < \alpha < \frac{1}{2} \wedge \theta \wedge \beta$, then the equation \eqref{equ} has a unique solution.
			  	  	\end{theorem}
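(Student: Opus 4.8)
The plan is to exploit the constant argument deviation in $\sigma$ to decouple, on each delay interval, the troublesome fractional integral from the reflection term, and then to run a Gronwall argument in the \emph{sup} norm, where—unlike in the $\lambda$-H\"older norm—the Skorokhod map is well behaved. Let $X^1,X^2$ be two solutions of \eqref{equ} (both exist and lie in $C^{\theta\wedge\frac12}([-r,T])$ by Theorem \ref{Th exi}), and set $z^i(t)=\eta(0)+\dint_{0}^{t}b(s,X^i_s)\,\mathrm{d}s+\dint_{0}^{t}\sigma(s,X^i(s-r))\,\mathrm{d}B^H_s$, so that $X^i(t)=z^i(t)+\sup_{0\le s\le t}(z^i(s))^-$. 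The first ingredient I would record is the nonexpansivity of the Skorokhod map in the sup norm: for every $t$,
$$\Big\|X^1-X^2\Big\|_{\infty,[0,t]}\le 2\,\sup_{0\le s\le t}\big|z^1(s)-z^2(s)\big|,$$
which is immediate from $\big|\sup_{s\le t}(z^1(s))^- - \sup_{s\le t}(z^2(s))^-\big|\le \sup_{s\le t}\big|z^1(s)-z^2(s)\big|$ together with the $1$-Lipschitz continuity of $x\mapsto x^-$.

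The core of the argument is an induction over the intervals $[kr,(k+1)r]$, $k=0,1,\dots$. On the base interval $[0,r]$ every $s$ satisfies $s-r\in[-r,0]$, whence $X^1(s-r)=X^2(s-r)=\eta(s-r)$; the two fractional integrals therefore coincide and $z^1(t)-z^2(t)=\dint_{0}^{t}\big(b(s,X^1_s)-b(s,X^2_s)\big)\,\mathrm{d}s$. Combining the nonexpansivity estimate above with the Lipschitz bound in $(Hb)$ and the identity $\|X^1_s-X^2_s\|_{\mathcal C_r}=\|X^1-X^2\|_{\infty,[0,s]}$ (valid since $X^1=X^2=\eta$ on $[-r,0]$) yields
$$\big\|X^1-X^2\big\|_{\infty,[0,t]}\le 2L_{b}^{1}\dint_{0}^{t}\big\|X^1-X^2\big\|_{\infty,[0,s]}\,\mathrm{d}s,\qquad t\in[0,r],$$
so Gronwall's lemma forces $X^1\equiv X^2$ on $[0,r]$.

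For the inductive step I assume $X^1=X^2$ on $[0,kr]$, which in turn gives $z^1=z^2$ there. The key observation is that for $t\in[kr,(k+1)r]$ and $s\in[kr,t]$ one has $s-r\in[(k-1)r,kr]$, a region where the solutions already agree; hence $\sigma(s,X^1(s-r))=\sigma(s,X^2(s-r))$ identically on $[kr,t]$, the fractional integral over $[kr,t]$ vanishes, and $z^1(t)-z^2(t)=\dint_{kr}^{t}\big(b(s,X^1_s)-b(s,X^2_s)\big)\,\mathrm{d}s$. The same nonexpansivity estimate (the running suprema of $(z^i)^-$ can differ only after time $kr$) and the Lipschitz property of $b$ again produce a Gronwall inequality on $[kr,(k+1)r]$, whence $X^1=X^2$ on $[0,(k+1)r]$. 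Iterating $\lceil T/r\rceil$ times gives $X^1=X^2$ on $[0,T]$.

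I want to stress where the difficulty genuinely sits and why the constant-delay hypothesis carries the whole proof. In the general equation the obstruction is exactly the one flagged in \cite{FR}: the difference of the two regulator terms cannot be controlled in the $\lambda$-H\"older norm, yet the fractional integral is only estimable in that norm, and the two demands are mutually incompatible. The deviation $\sigma(s,x(s-r))$ dissolves this tension, since on each subinterval the fractional integral depends only on the \emph{already determined} past and so contributes \emph{exactly} zero to the difference $X^1-X^2$; the whole estimate then collapses to the drift plus the reflection, both perfectly controllable in the sup norm. The only mild care required is the bookkeeping of the partition endpoints and the fact that the regulator is a global running supremum rather than an increment, and this is absorbed by the inequality $\|X^1-X^2\|_{\infty,[0,t]}\le 2\sup_{s\le t}|z^1(s)-z^2(s)|$ recorded at the outset.
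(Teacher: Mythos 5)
Your argument is correct and is essentially the paper's own proof: the same decomposition $X^i = z^i + \sup_{s\le \cdot}(z^i(s))^-$, the same sup-norm nonexpansivity bound $\|X^1-X^2\|_{\infty,[0,t]}\le 2\sup_{0\le s\le t}|z^1(s)-z^2(s)|$, the same observation that the constant deviation makes the fractional integrals coincide on each delay interval, and the same Gronwall-plus-induction over $[kr,(k+1)r]$. Your write-up is in fact slightly more explicit than the paper's on the inductive step, which the paper dispatches in one sentence.
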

			  	  	\begin{proof}
			  	  		We prove, via inductive argument, that the pathwise uniqueness for equation \eqref{equ} holds. Indeed let $x^i,\, i=1,2$ be two solutions of \eqref{equ} defined. First let us set 
			  	  					\begin{equation}
			  	  						z^i(t)= \eta(0)+\int_{0}^{t}b\left(s,x^i_{s} \right) \mathrm{d}s
			  	  						+ \int_{0}^{t}\sigma\left(s,x^i(s-r) \right) \mathrm{d}g_{s},\,t \in [0,T].
			  	  						\label{Appr eq3}
			  	  					\end{equation} 
			  	  					Since  $x^i,\, i=1,2$ have the same  initial condition $\eta$ on $[-r,0]$ it is clear that for any $ t  \in [0,r]$ we have
			  	  							$$
			  	  								\int_{0}^{t}\sigma(s,x^1(s-r))\mathrm{d}g_{s}=\int_{0}^{t}\sigma(s,x^2(s-r))\mathrm{d}g_{s}=\int_{0}^{t}\sigma(s,\eta (s-r))\mathrm{d}g_{s}.
			  	  							$$
			  	  			Let $t \in [0,r]$, it is easy to see that $$ \underset{0 \leq u \leq t}{\sup}\left|x^1(u)-x^2(u)\right| \leq 2\underset{0 \leq u \leq t}{\sup}\left|z^1(u)-z^2(u)\right|.$$
			  	  			Using the assumption $(Hb)$ and the above inequality it yields 
			  	  			\begin{align*}
			  	  				\underset{0 \leq u \leq t}{\sup}\left|x^1(u)-x^2(u)\right| &\leq 2\underset{0 \leq u \leq t}{\sup} \left|\int_{0}^{u}b(s,x_{s}^1)-b(s,x_{s}^2)\mathrm{d}s\right| \\
			  	  				&\leq 2L_2\underset{0 \leq u \leq t}{\sup} \int_{0}^{u}\left\Vert x_{s}^1-x_{s}^2 \right\Vert_{\mathcal{C}_r} \mathrm{d}s \\
			  	  				&\leq  2L_2\underset{0 \leq u \leq t}{\sup} \int_{0}^{u}\underset{0 \leq v \leq s}{\sup}\left|x^1(v)-x^2(v)\right| \mathrm{d}s \\
			  	  				&\leq 2L_2 \int_{0}^{t}\underset{0 \leq v \leq s}{\sup}\left|x^1(v)-x^2(v)\right| \mathrm{d}s.
			  	  			\end{align*}
			  	  			Thanks to Gronwall's inequality, we have that for all $t \in [0,r]$ $$\underset{0 \leq u \leq t}{\sup}\left|x^1(u)-x^2(u)\right|=0.$$ Using induction, we confirm the pathwise uniqueness within $[0, nr]$ for every $n$, thereby establishing pathwise uniqueness within $[0, T]$.
			  	  	\end{proof}			
			\section{Convergence of the sequence of Euler approximation}
			In this section, we make the assumption that $\sigma(s, x_s) = \sigma(s, x(s - r))$. Our next objective is to demonstrate that the sequence $x^n$, as defined in Equation~\eqref{app sch}, is a Cauchy sequence. From this point forward, we will assume that $b$ is a $\beta$-Holder function with respect to the first variable.
				\begin{proposition}
					 The sequence $x^n$ forms a Cauchy sequence within the space $C\left([-r, T], \mathbb{R}\right)$.
				\end{proposition}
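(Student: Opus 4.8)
The plan is to prove the Cauchy property by induction over the delay intervals $[kr,(k+1)r]$, exploiting the constant argument deviation $\sigma(s,x_s)=\sigma(s,x(s-r))$ assumed in this section: on each such interval the integrand of the fractional term is ``frozen'' to values of the approximation on the \emph{previous} interval, which the induction hypothesis already controls, while on $[0,r]$ it reduces to the common initial datum $\eta$. For a pair of (nested) grids $n<m$, I write $\Delta_{n,m}(t)=\sup_{-r\le u\le t}\vert x^n(u)-x^m(u)\vert$ and note $\Delta_{n,m}(0)=0$ since both approximations coincide with $\eta$ on $[-r,0]$. The decisive preliminary reduction is that the reflection map is Lipschitz in the uniform norm (unlike in the $\lambda$-H\"older norm, which is exactly the obstruction flagged in the uniqueness discussion): from \eqref{SPsol} and the subadditivity of $x\mapsto x^-$ one gets $\sup_{0\le u\le t}\vert x^n(u)-x^m(u)\vert\le 2\sup_{0\le u\le t}\vert z^n(u)-z^m(u)\vert$, so it suffices to estimate the difference of the pre-reflection processes $z^n-z^m$.

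For the drift contribution I would split $b(k_n(s),x^n_{k_n(s)})-b(k_m(s),x^m_{k_m(s)})$ into a time increment governed by the $\beta$-H\"older continuity of $b$ in its first variable (producing a discretization error $\lesssim\delta_n^{\beta}$) and a spatial increment governed by the Lipschitz assumption $(Hb)$. The spatial increment is bounded by $\Vert x^n_{k_n(s)}-x^n_s\Vert_{\mathcal{C}_r}+\Vert x^n_s-x^m_s\Vert_{\mathcal{C}_r}+\Vert x^m_s-x^m_{k_m(s)}\Vert_{\mathcal{C}_r}$, where the two outer terms tend to $0$ by the increment estimate \eqref{est incre} together with the uniform bound \eqref{unif bor}, and the middle term is dominated by $\Delta_{n,m}(s)$. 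After integration this yields for the integrated drift difference a bound $\le\varepsilon_{n,m}+C\int_0^t\Delta_{n,m}(s)\,\mathrm{d}s$ with $\varepsilon_{n,m}\to0$, i.e. the Gronwall structure.

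The heart of the argument is the fractional term $\int_0^u\bigl[\sigma(k_n(s),x^n(k_n(s)-r))-\sigma(k_m(s),x^m(k_m(s)-r))\bigr]\mathrm{d}g_s$. Setting $G_n(s)=\sigma(k_n(s),x^n(k_n(s)-r))$, I would mirror the $\epsilon$-decomposition used in the proof of Theorem \ref{Th exi}: by \eqref{FBMint-est} the supremum over $u$ is bounded by $C\Lambda_\alpha(g)\bigl(\sup_s\vert G_n(s)-G_m(s)\vert\bigr)^{\epsilon}$ times three integral factors built from the $(1-\epsilon)$-powers of the increments of $G_n$ and $G_m$; these factors are finite and bounded uniformly in $n,m$ because the equicontinuity estimate \eqref{equi est} makes the $x^n$ uniformly $(\theta\wedge\frac12)$-H\"older while $\sigma$ is H\"older in time and Lipschitz in space, the piecewise-constant structure of $G_n$ being absorbed by splitting the double integral at grid points exactly as in Proposition \ref{est-frac}. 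It then remains to check that the prefactor $\sup_s\vert G_n(s)-G_m(s)\vert\to0$ on $[0,t]$ for $t\le(k+1)r$: here the delay is essential, for the spatial argument $k_\bullet(s)-r$ lies in $[-r,kr]$, so $\vert x^n(k_n(s)-r)-x^m(k_m(s)-r)\vert\le C\,\delta_n^{\theta\wedge\frac12}+\Delta_{n,m}(kr)$, which vanishes by the induction hypothesis. Assembling the three pieces gives, for $t\in[kr,(k+1)r]$, an estimate $\Delta_{n,m}(t)\le\varepsilon'_{n,m}+C\int_{kr}^t\Delta_{n,m}(s)\,\mathrm{d}s$ with $\varepsilon'_{n,m}\to0$ (after absorbing $\int_0^{kr}\Delta_{n,m}\le kr\,\Delta_{n,m}(kr)$ into $\varepsilon'_{n,m}$), and Gronwall's lemma yields $\Delta_{n,m}((k+1)r)\to0$, which closes the induction and covers $[0,T]$ in finitely many steps.

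I expect the fractional term to be the main obstacle: the difference of two regulator/fractional terms cannot be controlled in the $\lambda$-H\"older norm, which is precisely why the treatment is confined to the constant-deviation case. The workaround rests on combining two ingredients with care, namely the sup-norm Lipschitz continuity of the Skorokhod map and the $\epsilon$-interpolation that trades a small power of $\Vert G_n-G_m\Vert_\infty$ against the uniformly bounded H\"older energy of the integrands, with the delay structure being exactly what guarantees that $\Vert G_n-G_m\Vert_\infty$ is driven only by already-controlled past values.
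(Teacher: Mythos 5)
Your proposal is correct in substance and shares the paper's structural skeleton --- the sup-norm Lipschitz reduction through the Skorokhod map ($\sup_{0\le u\le t}|x^n-x^m|\le 2\sup_{0\le u\le t}|z^n-z^m|$), induction over the delay intervals $[kr,(k+1)r]$, and the crucial use of the constant deviation so that the fractional integrand is frozen to values on the previous interval --- but the technical implementation differs in two places. On the base interval $[0,r]$ the paper does not run a Gronwall argument for general $n\le m$: it writes the one-step grid recursion $z^n(t)=z^n(k_n(t))+(t-k_n(t))b(\cdot)+(g_t-g_{k_n(t)})\sigma(\cdot)$ for the \emph{consecutive} pair $m=n+1$, applies the discrete Gronwall lemma of Cryer--Tavernini to obtain the explicit summable rate $\sup_{[0,r]}|x^n-x^{n+1}|\le C\,h(\delta_n)$, and concludes by telescoping; your route treats $[0,r]$ like every other interval, which works but loses the quantitative rate. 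For the fractional term on $[kr,(k+1)r]$, $k\ge 1$, the paper appeals to the convergence \eqref{lim 2} together with the uniform convergence already established on $[0,kr]$ to make $\sup_v\bigl|\int_0^v[\sigma(k_n(u),x^n(k_n(u)-r))-\sigma(k_m(u),x^m(k_m(u)-r))]\,\mathrm{d}g_u\bigr|<\epsilon$, whereas you estimate this difference directly via the $\epsilon$-interpolation of \eqref{Int-est} (small power of $\|G_n-G_m\|_\infty$ against uniformly bounded $(1-\epsilon)$-H\"older energies). Your version is arguably the cleaner of the two here, since \eqref{lim 2} as stated is a convergence statement toward the subsequential limit $x$ rather than a Cauchy-type estimate for an arbitrary pair $(n,m)$, and your direct bound closes that gap; the price is that the resulting control degrades to an $\epsilon$-power of $\Delta_{n,m}(kr)$, which is harmless for the induction (only convergence to zero is needed) but precludes the explicit rate the paper extracts on $[0,r]$. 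Both arguments rely on the same uniform boundedness of the double integrals of $(1-\epsilon)$-powers of increments of the piecewise-constant integrands, which the paper itself only asserts in the proof of Theorem \ref{Th exi}.
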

				\begin{proof}
					Let $n$ and $m$ be arbitrary integers such that $n \leq m$, and define $d(t) = x^n(t) - x^m(t)$ for $t \in [-r, T]$. Using Equation~\eqref{app sch}, we can derive the following inequalities for any $t \in [0, T]$ 
						\begin{align*}
							\sup_{0 \leq s \leq t}\left|d(s)\right|\leq & \sup_{0 \leq s \leq t}\left|z^n(s)-z^m(s)\right|+\sup_{0 \leq s \leq t}\left|\sup_{0 \leq u \leq s}(z^n(u)^-)-\sup_{0 \leq u \leq s}(z^m(u)^-)\right| \\
							&\leq 2\sup_{0 \leq s \leq t}\left|z^n(s)-z^m(s)\right|.
						\end{align*}
				let $t \in [0, T]$. Using Equation~\eqref{app sch}, we can express $z^n(t)$ and $z^m(t)$ as follows 
					\begin{align}
						z^n(t)&=z^n(k_n(t))+(t-k_n(t))b\left(k_n(t),x_{k_n(t)}^n\right) \nonumber\\
						&+(g_t-g_{k_n(t)})\sigma\left(k_n(t),x^n(k_n(t)-r)\right), \label{equa1} \\
						z^m(t)&=z^m(k_m(t))+(t-k_m(t))b\left(k_m(t),x_{k_m(t)}^m\right) \nonumber\\
								&+(g_t-g_{k_m(t)})\sigma\left(k_m(t),x^m(k_m(t)-r)\right). \label{equa2}
					\end{align}
				By subtracting Equation~\eqref{equa1} from Equation~\eqref{equa2}, we obtain 
					\begin{align*}
						z^n(t)-z^m(t)&=z^n(k_m(t))-z^m(k_m(t))+(t-k_m(t))\left[b(k_n(t),x_{k_n(t)}^n)-b(k_m(t),x_{k_m(t)}^n)\right] \\
						&+(g_t-g_{k_m(t)})\left[\sigma(k_n(t),x^n(k_n(t)-r))-\sigma(k_m(t),x^m(k_m(t)-r))\right] \\
						&+(k_m(t)-k_n(t))b(k_n(t),x_{k_n(t)}^n)+(g_{k_m(t)}-g_{k_n(t)})\sigma(k_n(t),x^n(k_n(t)-r)) \\
						&+z^n(k_n(t))-z^n(k_m(t)).
					\end{align*}
				Since $n \leq m$, we have $k_m(t) \in [k_n(t), t]$. Therefore, by using Equation~\eqref{equa1}, we can express the difference as follows
					\begin{align*}
						z^n(k_m(t))-z^n(k_n(t))&=(k_m(t)-k_n(t))b(k_n(t),x_{k_n(t)}^n)\\
						&+(g_{k_m(t)}-g_{k_n(t)})\sigma(k_n(t),x^n(k_n(t)-r)).
					\end{align*}
				Hence, we can express the difference between $z^n(t)$ and $z^m(t)$ as	
					\begin{align*}
						z^n(t)-z^m(t)&=z^n(k_m(t))-z^m(k_m(t))+(t-k_m(t))\left[b(k_n(t),x_{k_n(t)}^n)-b(k_m(t),x_{k_m(t)}^n)\right] \\
						&+(g_t-g_{k_m(t)})\left[\sigma(k_n(t),x^n(k_n(t)-r))-\sigma(k_m(t),x^m(k_m(t)-r))\right].
					\end{align*}
				By utilizing the assumption in $(Hb)$, and recognizing that $b$ is a $\beta$-Holder function, we can derive 
					\begin{align*}
						\left|b\left(k_n(u),x_{k_n(u)}^n\right)-b\left(k_m(u),x_{k_m(u)}^m\right)\right| &\leq \left|b\left(k_n(u),x_{k_n(u)}^n\right)-b\left(k_n(u),x_{k_m(u)}^m\right)\right|\\
						&+\left|b\left(k_n(u),x_{k_m(u)}^m\right)-b\left(k_m(u),x_{k_m(u)}^m\right)\right| \\
						&\leq C \left[\left\Vert x_{k_n(u)}^n-x_{k_m(u)}^m \right\Vert_{\mathcal{C}_r}+(k_m(u)-k_n(u))^{\beta}\right].
					\end{align*}
				 On the other hand, according to Proposition \eqref{prop 3.3}, we have
				 				\begin{align*}
				 					\left\Vert x_{k_n(t)}^n-x_{k_m(t)}^m \right\Vert_{\mathcal{C}_r}&=\sup_{-r \leq v \leq 0}\left|x^n(k_n(t)+v)-x^m(k_m(t)+v)\right| \\
				 					&\leq \sup_{-r \leq v \leq 0}\left|x^n(k_m(t)+v)-x^m(k_m(t)+v)\right| \\
				 					&+\sup_{-r \leq v \leq 0}\left|x^n(k_n(t)+v)-x^n(k_m(t)+v)\right| \\
				 					&\leq \sup_{0 \leq s \leq k_m(t)}\left|d(s)\right|+C(k_m(t)-k_n(t))^{\theta\wedge \frac{1}{2}}.
				 				\end{align*}
				Therefore,
				 		\begin{equation}
				 			\left|b\left(k_n(t),x_{k_n(t)}^n\right)-b\left(k_m(t),x_{k_m(t)}^m\right)\right| \leq C\left[\sup_{0 \leq s \leq k_m(t)}\left|d(s)\right|+\delta_{n}^{\theta \wedge \frac{1}{2}\wedge \beta}\right]. \label{est b}
				 		\end{equation}
				Where C is a constant do not depend on $n$ and $m$. Now, let's consider the case where $t \in [0, k_m(r)+\delta_m[$. By using the assumption in $(H\sigma)$, we obtain
				 				\begin{align*}
				 					\left|\sigma\left(k_n(t),x^n(k_n(t)-r)\right)-\sigma\left(k_m(t),x^m(k_m(t)-r)\right)\right|&=\left|\sigma\left(k_n(t),\eta(k_n(t)-r)\right)-\sigma\left(k_m(t),\eta(k_m(t)-r)\right)\right| \\
				 					&\leq C\left[(k_m(t)-k_n(t))^{\beta}+(k_m(t)-k_n(t))^{\theta}\right] \\
				 					&\leq C\delta_{n}^{\beta \wedge \theta}.
				 				\end{align*}
				So that 
					\begin{align*}
						\left|z^n(t)-z^m(t)\right|&\leq \sup_{0 \leq s \leq k_m(t)}\left|z^n(s)-z^m(s)\right|+C\left(\delta_{m}\sup_{0 \leq s \leq k_m(t)}\left|z^n(s)-z^m(s)\right|\right. \\
						&\left.+\delta_{m}\delta_{n}^{\beta \wedge \theta \wedge \frac{1}{2}}+\Lambda_{\alpha}(g)\delta_{m}^{1-\alpha}\delta_{n}^{\theta \wedge \beta}\right) \\
						&\leq (1+C\delta_{m})\sup_{0 \leq s \leq k_m(t)}\left|z^n(s)-z^m(s)\right|+C\left(\delta_{m}\delta_{n}^{\beta \wedge \theta \wedge \frac{1}{2}}\right. \\
						&\left.+\Lambda_{\alpha}(g)\delta_{m}^{1-\alpha}\delta_{n}^{\theta \wedge \beta}\right).
					\end{align*}
				Assuming that $m = n + 1$, we can establish the following equality: $\delta_{n} = 2\delta_{m}$. Consequently, we can derive the following expression
						$$	
							\left|z^n(t)-z^{n+1}(t)\right| \leq (1+C\delta_{n+1})\sup_{0 \leq v \leq k_{n+1}(t)}\left|z^n(v)-z^{n+1}(v)\right|+\delta_{n+1}h(\delta_{n}).
						$$ 
				Here, the function $h(\delta_{n})$ is defined as 
						$$
							h(\delta_{n})=C\left(\delta_{n}^{\beta \wedge \theta \wedge \frac{1}{2}}+\Lambda_{\alpha}(g)\delta_{n}^{\theta-\alpha \wedge \beta-\alpha}\right).
						$$
				Hence 
						$$
							\left|z^n(t)-z^{n+1}(t)\right| \leq (1+C\delta_{n+1})\sup_{0 \leq v \leq t_{i}^{n+1}}\left|z^n(v)-z^{n+1}(v)\right|+\delta_{n+1}h(\delta_{n}), \quad t \in [t_{i}^{n+1},t_{i+1}^{n+1}[.
						$$
				By applying \cite[Lemma 3.2]{CT}, we can conclude 
						$$
							\sup_{0 \leq v \leq r}\left|x^n(v)-x^{n+1}(v)\right| \leq \dfrac{e^{TC}-1}{C}h(\delta_{n}).
						$$ 
				Summing these inequalities, we obtain
						$$
							\sum_{n=0}^{+\infty}\sup_{0 \leq v \leq r}\left|x^n(v)-x^{n+1}(v)\right| \leq \dfrac{e^{TC}-1}{C}\sum_{n=0}^{\infty}h(\delta_{n})<+\infty.
						$$ 
				The uniform convergence of the sequence $(x^n)_{n \geq 0}$ in the interval $[0, r]$ is a direct consequence of the condition $\alpha < \theta \wedge \beta$. This condition ensures the convergence of the sequence within this specific interval. Now, let's consider the case where $t \in [0, 2r]$, $n \leq m$, and $s \in [0, t]$. Making use of \eqref{est b} we obtain
					\begin{align*}
						\left|z^n(s)-z^m(s)\right|&=\left|\int_{0}^{s}\left[b(k_n(u),x_{k_n(u)}^{n})-b(k_m(u),x_{k_m(u)}^{m})\right]\mathrm{d}u\right. \\
						&\left.+\int_{0}^{s}\left[\sigma(k_n(u),x^{n}(k_n(u)-r))-\sigma(k_m(u),x^{m}(k_m(u)-r))\right]\mathrm{d}g_u\right| \\
						&\leq C\left(\delta_{n}^{\theta \wedge \frac{1}{2}\wedge \beta}+\int_{0}^{s}\sup_{0 \leq s \leq k_m(u)}\left|d(s)\right|\mathrm{d}u\right)\\
						&+\sup_{0 \leq v \leq 2r} \left|\int_{0}^{v}\sigma(k_n(u),x^{n}(k_n(u)-r))-\sigma(k_m(u),x^{m}(k_m(u)-r))\mathrm{d}g_u\right|.
					\end{align*}
				Utilizing the result from Equation \eqref{lim 2} and the uniform convergence of $x^n$ on $[0, r]$, we can conclude that for every $\epsilon > 0$, there exists an integer $N_0$ such that for every $N_0 \leq n \leq m$, the following inequality holds 
						$$
							\sup_{0 \leq v \leq 2r} \left|\int_{0}^{v}\sigma(k_n(u),x^{n}(k_n(u)-r))-\sigma(k_m(u),x^{m}(k_m(u)-r))\mathrm{d}g_u\right|<\epsilon.
						$$ 
				Consequently, we can derive the following inequality
						$$
							\sup_{0 \leq s \leq t}\left|x^n(s)-x^m(s)\right| \leq C\left(\delta_{n}^{\theta \wedge \frac{1}{2}\wedge \beta}+\int_{0}^{t}\sup_{0 \leq v \leq u}\left|x^n(v)-x^m(v)\right|\mathrm{d}u\right)+\epsilon.
						$$ 
				Applying Gronwall's lemma, we obtain 
						$$
							\sup_{0 \leq s \leq t}\left|x^n(s)-x^m(s)\right|\leq C (\delta_{n}^{\beta \wedge \theta \wedge \frac{1}{2}}+\epsilon).
						$$
				This demonstrates that $x^n$ converges uniformly in the interval $[0, 2r]$. By repeating these steps, we can establish through an induction argument that $x^n$ is a cauchy sequence. Consequently, our sequence converges, enabling the possibility of obtaining numerical results for solutions.
				\end{proof}
				\begin{examples}
						  		The following equations satisfy our hypothesis 
						  		\begin{itemize}
						  			\item (linear example) for any $a,b \in \mathbb{R}$
						  				\begin{align*}
						  					x(t)&=r+\int_{0}^{t}x(s-r)\mathrm{d}s+\int_{0}^{t}\left(ax(s-r)+b\right)\mathrm{d}g_s+y(t),\quad t \in [0,T], \\
						  					x(t)&=t+r,\quad t \in [-r,0].
						  				\end{align*}
						  			\item (non-linear example) 
						  				\begin{align*}
						  				  	x(t)&=\int_{0}^{t}\cos(x(s))\mathrm{d}s+\int_{0}^{t}\sin(x(s-r)+s)\mathrm{d}g_s+y(t),\quad t \in [0,T], \\
						  				  	x(t)&=t^2,\quad t \in [-r,0].
						  				  \end{align*}
						  		\end{itemize}
					\end{examples}
				\section{Stochastic integrals and equations with respect to the fractional
				Brownian motion} 
				In this section, we apply the deterministic results in order to prove the main theorem of this paper. Let $B^H = {B_{t}^H, t \in [0, T]}$ be a fractional Brownian motion with Hurst parameter $H \in (1/2, 1)$, defined on a complete probability space $(\Omega, \mathcal{F}, \mathbb{P})$. It is a centered Gaussian self-similar process with stationary increments, and satisfies the following property: for every $p > 0$,
					$$
						\mathbb{E}\left[\left|B_{t}^H-B_{s}^H\right|^p\right]=\mathbb{E}\left[\left|B_{1}^{H}\right|^{p}\right]\left|t-s\right|^{pH},
					$$ 
				which implies that there exists a version of the fBm $B^H$ which is a continuous process for every $\gamma < H$, thanks to Kolmogorov's continuity theorem. As a consequence, if $u = {u_t, t \in [0, T]}$ is a stochastic process whose trajectories belong to the space $W_{[0, T]}^{\alpha, 1}$, with $1 - H < \alpha < 1/2$, the pathwise integral $\int_{0}^{.} u_s , \mathrm{d}B_{s}^H$ exists and we have the following estimate:
					$$
						\left|\int_{0}^{T}u_s\mathrm{d}B_{s}^H\right| \leq \Lambda_{\alpha}(B^H) \parallel u \parallel_{\alpha,1,[0,T]},
					$$
				where $\Lambda_{\alpha}(B^H)$ is a random variable that has moments of all orders (see \cite[Lemma 7.5]{NR}). Moreover, if the trajectories of the process $u$ belong to the space $W_{[0, T]}^{\alpha, \infty}$, then the integral $\int_{0}^{.} u_s , \mathrm{d}B_{s}^H$ is Hölder continuous of order $1 - \alpha$ with trajectories in $W_{[0, T]}^{\alpha, \infty}$.
				\begin{theorem}
					Suppose that the coefficients $b(t,x)$ and $\sigma(t,x)$ satisfy the assumptions $(Hb)$ and $(H\sigma)$ with $1-H<\theta \wedge \beta$. Then there exists a stochastic process $X \in C^{\theta \wedge \frac{1}{2}}([-r,T])$ that is a solution to the stochastic equation \eqref{equ}. 
				\end{theorem}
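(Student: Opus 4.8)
The plan is to solve the equation pathwise: treat almost every trajectory of $B^H$ as an admissible driving signal $g$ and then invoke the deterministic existence theorem already established in Theorem~\ref{Th exi}. The bridge between the two settings is the Hölder regularity of the fractional Brownian motion together with the moment bounds on $\Lambda_\alpha(B^H)$, so that all the quantitative constants appearing in Section~3 become almost surely finite random variables.

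First I would fix $\alpha$ with $1-H<\alpha<\frac{1}{2}\wedge\theta\wedge\beta$; this is possible precisely because $H>1/2$ forces $1-H<\frac12$, while the hypothesis $1-H<\theta\wedge\beta$ gives $1-H<\theta$ and $1-H<\beta$, so the interval is nonempty. By Kolmogorov's continuity theorem the trajectories of $B^H$ are almost surely $\gamma$-Hölder for every $\gamma<H$; choosing $\gamma$ with $1-\alpha<\gamma<H$ and using the standard estimate that a $\gamma$-Hölder function with $\gamma>1-\alpha$ lies in $W_{[0,T]}^{1-\alpha,\infty}$, I obtain $B^H(\cdot,\omega)\in W_{[0,T]}^{1-\alpha,\infty}$ for $\omega$ outside a $\mathbb{P}$-null set $\mathcal{N}$. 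On this set $\Lambda_\alpha(B^H)$ is finite, and by \cite[Lemma 7.5]{NR} it even has moments of all orders.

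Next, for each fixed $\omega\notin\mathcal{N}$ I would apply Theorem~\ref{Th exi} with $g=B^H(\cdot,\omega)$: since every hypothesis of that theorem holds for this choice of $\alpha$, there is a function $x(\cdot,\omega)\in C^{\theta\wedge\frac12}([-r,T])$ solving the equation driven by $g$. Because the generalized Lebesgue--Stieltjes integral \eqref{eq:frac int} against $g=B^H(\omega)$ coincides, trajectory by trajectory, with the pathwise integral $\int_0^{\cdot}\sigma(s,x_s)\,\mathrm{d}B_s^H$ in the sense of Zähle, the process $X(\cdot,\omega):=x(\cdot,\omega)$ solves \eqref{equ}. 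The regularity transfers as well: in Proposition~\ref{prop 3.3} the constant controlling the $(\theta\wedge\frac12)$-Hölder seminorm depends on $\omega$ only through $\Lambda_\alpha(B^H)$ (via \eqref{unif bor}), which is a.s. finite, whence $X\in C^{\theta\wedge\frac12}([-r,T])$ almost surely.

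The step I expect to be the main obstacle is \emph{measurability}, i.e. verifying that $\omega\mapsto X(\cdot,\omega)$ is genuinely a stochastic process and not merely a pathwise-defined object. The Euler approximations $x^{n}$ are measurable in $\omega$, since each is built from finitely many increments of $B^H$ through the explicit Skorokhod map \eqref{SPsol}, which is continuous; moreover Proposition~\ref{prop 3.3} and \eqref{unif bor} make $\{x^{n}\}$ equibounded and equicontinuous almost surely. However, the Arzel\`a--Ascoli argument underlying Theorem~\ref{Th exi} only extracts a subsequence whose choice may depend on $\omega$, so measurability of the limit is not automatic in the general case. I would resolve this either by a measurable selection theorem (Kuratowski--Ryll-Nardzewski) applied to the a.s. nonempty, closed set of solutions, or, under the structural assumption $\sigma(s,x_s)=\sigma(s,x(s-r))$, by appealing to the Cauchy property of $(x^{n})$ proved in Section~4, which forces the whole sequence to converge to the measurable limit $X$. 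Everything else is a direct pathwise reading of the deterministic estimates of Proposition~\ref{est-frac} together with \eqref{unif bor} and \eqref{equi est}.
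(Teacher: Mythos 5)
Your proposal is correct and takes essentially the same route as the paper: fix $\alpha\in\bigl(1-H,\tfrac12\wedge\theta\wedge\beta\bigr)$, use the a.s.\ H\"older regularity of $B^{H}$ to place its trajectories in $W_{[0,T]}^{1-\alpha,\infty}$ with $\Lambda_{\alpha}(B^{H})$ finite, and apply Theorem~\ref{Th exi} pathwise to $g=B^{H}(\cdot,\omega)$. The measurability point you raise is not treated in the paper at all (its proof instead closes with a moment bound on $\left\Vert X\right\Vert_{\infty,\alpha,T}$ obtained from the Gronwall-type lemma and the integrability of $\Lambda_{\alpha}(B^{H})$), so your discussion is a legitimate extra precaution rather than a deviation from the argument.
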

				\begin{proof}
					The existence of a solution follows directly from the deterministic theorem \eqref{Th exi}. Moreover, the solution is $\theta \wedge \frac{1}{2}$-Holder continuous. By following the same steps as in Proposition \eqref{Prop 1}, we obtain
						$$
							\left\Vert X \right\Vert_{\infty,\alpha,T}^{2N} \leq C_{N}\left\lbrace 1+\left(1+\Lambda_{\alpha}(g)^{2N}\right)\int_{0}^{T}\left((T-y)^{-2\alpha}+y^{-\alpha}\right)\left(1+\left\Vert X \right\Vert_{\infty,\alpha,y}^{2N}\right)\mathrm{d}y \right\rbrace, N\geq 1.
						$$ 
					Applying a Gronwall-type lemma \cite[Lemma 6.7]{NR}, it follows that
						$$
								\left\Vert X \right\Vert_{\infty,\alpha,T}^{2N} \leq C_{1}\exp\left(C_{N}\Lambda_{\alpha}(B^{H})^{2N/(1-2\alpha)}\right)<+\infty.
						$$ 
				\end{proof}
				\section*{Acknowledgement}
				The author would like to express his sincere gratitude to Professor Georgiy Shevchenko for suggesting Lemma \eqref{tech-est}, which significantly streamlined the proof of the proposition \eqref{Prop 1}. I would also like to express my deep gratitude to Professor Mohamed Erraoui for their guidance, enthusiastic encouragement and for many stimulating conversations.
	
\end{document}